\newcommand{\sy}{\boldsymbol{\Psi}}
\newcommand{\N}{\mathbb{N}}									
\newcommand{\R}{\mathbb{R}}
\newcommand{\vertiii}[1]{{\left\vert\kern-0.25ex\left\vert\kern-0.25ex\left\vert #1 
    \right\vert\kern-0.25ex\right\vert\kern-0.25ex\right\vert}}
\newcommand{\inner}[2]{\left\langle #1, #2 \right\rangle}
\newcommand{\norm}[1]{\left\Vert #1 \right\Vert}
\newcommand{\abs}[1]{\left\vert #1 \right\vert}
\newtheorem{theorem}{Theorem}[section]
\newtheorem{lemma}[theorem]{Lemma}
\newtheorem{proposition}[theorem]{Proposition}
\newtheorem*{remark}{Remark}
\newtheorem{definition}[theorem]{Definition}
\begin{document}
	\title{Infinite Boundary Friction Limit for Weak Solutions of the  Stochastic Navier-Stokes Equations}
	\author{Daniel Goodair \footnote{\'{E}cole Polytechnique F\'{e}d\'{e}rale de Lausanne, daniel.goodair@epfl.ch}}
	\date{\today} 
	\maketitle
\setcitestyle{numbers}	
\thispagestyle{empty}
\begin{abstract}
We address convergence of the unique weak solutions of the 2D stochastic Navier-Stokes equations with Navier boundary conditions, as the boundary friction is taken uniformly to infinity, to the unique weak solution under the no-slip condition. Our result is that for initial velocity in $L^2_x$, the convergence holds in probability in $C_tW^{-\varepsilon,2}_x \cap L^2_tL^2_x$ for any $0 < \varepsilon$. The noise is of transport-stretching type, although the theorem holds with other transport, multiplicative and additive noise structures. This seems to be the first work concerning the large boundary friction limit with noise, and convergence for weak solutions, due to only $L^2_{x}$ initial data, appears new even deterministically.

\end{abstract}
	
\tableofcontents
\textcolor{white}{Hello}
\thispagestyle{empty}
\newpage

\setcounter{page}{1}

\section{Introduction} \label{section introduction}

We are concerned with the 2D stochastic Navier-Stokes equations in a smooth bounded domain $\mathscr{O}$ under Navier boundary conditions, which read as
\begin{equation} \label{navier boundary conditions}
    u \cdot \underline{n} = 0, \qquad 2(Du)\underline{n} \cdot \mathbf{\underline{\iota}} + \alpha u\cdot \mathbf{\underline{\iota}} = 0
\end{equation} 
where $u$ represents the fluid velocity, $\underline{n}$ is the unit outwards normal vector, $\underline{\iota}$ the unit tangent vector, $Du$ is the rate of strain tensor $(Du)^{k,l}:=\frac{1}{2}\left(\partial_ku^l + \partial_lu^k\right)$ and $\alpha \in C^2(\partial \mathscr{O};\R)$ represents a friction coefficient which determines the extent to which the fluid slips on the boundary relative to the tangential stress. These conditions were first proposed by Navier in [\cite{navier1822memoire}, \cite{navier1827lois}], observed in [\cite{maxwell1879vii}] from the kinetic theory of gases and rigorously derived in [\cite{masmoudi2003boltzmann}] as a hydrodynamic limit of the Boltzmann equation under kinetic boundary conditions. Furthermore they have proven viable for modelling rough boundaries, whereby the irregular boundary is smoothened and supplemented with a homogenised boundary condition reflecting the average effect of the rough boundary; this avenue is explored in  [\cite{basson2008wall}, \cite{gerard2010relevance}, \cite{jager2001roughness}], and may well have taken motivation in earlier works such as [\cite{pare1992existence}] where the Navier boundary conditions are placed on an artificially imposed boundary within the boundary layer to capture the turbulent dynamics there, following the $k \sim \varepsilon$ model of turbulence introduced in [\cite{launder1983numerical}]. In this regime, for a smooth domain,  $\alpha$ is explicitly defined in terms of the velocity: otherwise, $\alpha$ depends on the roughness of the boundary. Experimental evidence supporting that boundary roughness dominates the dynamics is given in [\cite{zhu2002limits}].\\

Of course conditions (\ref{navier boundary conditions}) are less classical in the study of the Navier-Stokes equations than the no-slip condition, given simply by
\begin{equation} \label{noslip con}
    u = 0
\end{equation}
on the boundary $\partial \mathscr{O}$. Formally we can reconcile these notions by taking the boundary friction $\alpha$ to infinity in (\ref{navier boundary conditions}), as then $u \cdot \underline{\iota}$ dominates the second equation and the whole of (\ref{navier boundary conditions}) reduces to (\ref{noslip con}). The goal of this paper is to rigorously address the convergence of solutions under Navier boundary conditions (\ref{navier boundary conditions}) to the solution without slip at the boundary (\ref{noslip con}), in the presence of noise.

\subsection{The Model} \label{the model}

Specifically we shall consider the Navier-Stokes equations with a transport-stretching noise, following the principle of \textit{Stochastic Advection by Lie Transport} introduced in [\cite{holm2015variational}], given by
\begin{equation} \label{projected Ito Salt} 
    u_t = u_0 - \int_0^t\mathcal{P}\mathcal{L}_{u_s}u_s\ ds - \nu\int_0^t A u_s\, ds + \frac{1}{2}\int_0^t\sum_{i=1}^\infty \mathcal{P}B_i^2u_s ds - \int_0^t \mathcal{P}Bu_s d\mathcal{W}_s. 
\end{equation}
Here $\mathcal{P}$ denotes the Leray Projector onto divergence-free vector fields tangent to the boundary, $A = -\mathcal{P}\Delta$ is the Stokes Operator and $\mathcal{L}$ represents the nonlinear term defined for sufficiently regular functions $f,g:\mathscr{O} \rightarrow \R^2$ by $\mathcal{L}_fg:= \sum_{j=1}^2f^j\partial_jg$, where the superscript notation is for the $j^{\textnormal{th}}-$component mapping. As for the noise, $\mathcal{W}$ is a Cylindrical Brownian Motion and $B$ is defined with respect to a collection of divergence-free vector-fields $(\xi_i)$ by
\begin{equation} \label{def of trans stretch} B_if := \mathcal{L}_{\xi_i}f + \mathcal{T}_{\xi_i}f := \sum_{j=1}^2\left(\xi_i^j\partial_jf^j + f^j\nabla \xi_i^j \right).\end{equation}
The noise is introduced as a Stratonovich integral, at the non-projected level of the equation where the pressure appears; the $\sum_{i=1}^\infty\mathcal{P}B_i^2$ term arises as the It\^{o}-Stratonovich corrector after first projecting the equation by $\mathcal{P}$. We note use of the property that $\mathcal{P}B_i\mathcal{P}= \mathcal{P}B_i$ hence $(\mathcal{P}B_i)^2 = \mathcal{P}B_i^2$ observed in [\cite{goodair2022navier}] Lemma 2.7, and defer to [\cite{goodair2022navier}] Proposition 3.2 for the full conversion. Our motivation for considering this noise structure is threefold.\\

Firstly is the physical significance, which is entirely the motivation of [\cite{holm2015variational}]. In [\cite{holm2015variational}] the noise is derived through geometric variational principles and is shown to preserve Kelvin's Circulation Theorem. This theory has been expanded upon across [\cite{crisan2022variational}, \cite{holm2021stochastic}, \cite{street2021semi}], and has run in tandem with derivations of transport noise in fluids through a Lagrangian Reynolds Decomposition and Transport Theorem given by M\'{e}min [\cite{memin2014fluid}] and further developed in [\cite{chapron2018large}, \cite{debussche2024variational}, \cite{resseguier2017geophysical}]. The theory is bolstered by numerical analysis and data assimilation presented throughout [\cite{chapron2024stochastic}, \cite{cotter2020data}, \cite{cotter2019numerically},  \cite{crisan2023implementation}, \cite{crisan2023theoretical}, \cite{dufee2022stochastic}, \cite{ephrati2023data}] amongst many others. Stratonovich transport noise has also been derived following a stochastic model reduction scheme in [\cite{debussche2024second}, \cite{flandoli20212d}, \cite{flandoli2022additive}]. All of this recent progress supports the classical ideas of [\cite{brzezniak1992stochastic}, \cite{kraichnan1968small}, \cite{mikulevicius2001equations}, \cite{mikulevicius2004stochastic}], and we suggest [\cite{flandoli2023stochastic}] for a review of the topic.\\

Secondly is the fact that our choice of noise is very challenging, so proving this difficult case encompasses many others. Well-posedness results which are foundational for the deterministic Euler and Navier-Stokes equations remain open in this stochastic case, particularly in the presence of a boundary: in both the stochastic Euler and Navier-Stokes equations, for their most classical boundary condition, the existence of global strong solutions in 2D and local strong solutions in 3D is unknown. This is in contrast to a multiplicative noise which is (locally) bounded and Lipschitz on the relevant function spaces, where the corresponding theory is given in [\cite{glatt2009strong}, \cite{glatt2012local}]. Interestingly, however, the existence of strong solutions of (\ref{projected Ito Salt}) under Navier boundary conditions was proven in [\cite{goodair2025navier}], so the seemingly more complicated boundary condition acts favourably in the presence of noise. Fortunately the existence of weak solutions is known in both cases, which are the objects of this paper. We only discuss the open questions to motivate the analytical challenges in place with transport-type noise, and as a precursor to say that our results continue to hold with a bounded and Lipschitz multiplicative noise, additive noise in the projected $L^2_x$ space, and indeed a purely transport noise (without stretching term $\mathcal{T}_{\xi_i}$).\\

Thirdly we draw attention to the potential regularising properties of transport noise. This was brought to prominence in [\cite{flandoli2010well}] where Flandoli, Gubinelli and Priola prove that transport noise restores uniqueness in the transport equation. The theory was given a new lease of life from the work of Galeati [\cite{galeati2020convergence}], demonstrating that one can choose a transport noise such that the stochastic equation is close to the deterministic equation with added dissipation. In consequence the favourable properties of the deterministic equation with enhanced dissipation can be leveraged, leading in some cases to blow-up control
[\cite{agresti2024delayed}, \cite{flandoli2021delayed}, \cite{flandoli2021high}] and uniqueness [\cite{coghi2023existence}, \cite{galeati2023weak}]. We are keen to highlight this direction due to the relationship that the Navier boundary conditions have with the no-slip condition and the inviscid limit, along with the critical open problems regarding the inviscid limit and no-slip condition; we will expand on this in the following subsection.

\subsection{Deterministic Theory for the Navier Boundary Conditions} \label{deterministic theory}

Analytically, the Navier boundary conditions are attractive for three key reasons which we summarise below. Loosely speaking, let $u^{\alpha, \nu}$ denote a solution of the Navier-Stokes equations with Navier boundary friction $\alpha$ and viscosity $\nu > 0$, $u^{\infty,\nu}$ a solution of the Navier-Stokes equations with no-slip boundary condition and viscosity $\nu > 0$, and $u^{0}$ a solution of the Euler equation. Then, in a sense to be quantified, we have that:
\begin{enumerate}
    \item \label{first one in itemy} $u^{\alpha,\nu} \rightarrow u^{\infty,\nu}$ as $\alpha \rightarrow \infty$;
    \item \label{second one in itemy} $u^{\alpha,\nu} \rightarrow u^{0}$ as $\nu \rightarrow 0$;
    \item \label{third one in itemy} It is unknown whether $u^{\infty,\nu}$ converges to $u^0$ as $\nu \rightarrow 0$.
\end{enumerate}

Let us explain these properties. Item \ref{first one in itemy} is the direction of this paper, and was shown to hold in 2D in $C_tL^2_x \cap L^2_tW^{1,2}_x$ for initial $u_0 \in W^{3,2}_x$ in [\cite{kelliher2006navier}], which was relaxed to $u_0 \in W^{1,p}_x$ for $p > 2$ in [\cite{kim2009large}]. Item \ref{second one in itemy}, in contrast to item \ref{third one in itemy}, is perhaps the main analytical draw to the Navier boundary conditions. Item \ref{second one in itemy} has been shown in 2D and locally in 3D, in various topologies, throughout the works [\cite{clopeau1998vanishing}, \cite{filho2005inviscid}, \cite{iftimie2006inviscid}, \cite{kelliher2006navier}, \cite{masmoudi2012uniform}] and more. Meanwhile positive results for the inviscid limit under no-slip condition have been limited to very specific cases regarding analyticity of initial data or structure of the domain [\cite{lopes2008vanishing1}, \cite{lopes2008vanishing}, \cite{masmoudi1998euler}, \cite{sammartino998zero}, \cite{sammartino1998zero}], or conditional results such as [\cite{kato1984remarks}, \cite{kelliher2007kato} ,\cite{wang2001kato}] where the condition is not known to be satisfied for a general smooth domain with Sobolev valued initial condition. As supported observationally across [\cite{lyman1990vorticity}, \cite{morton1984generation}, \cite{serpelloni2013vertical}, \cite{tani1962production}, \cite{wallace2010measurement}], and seen explicitly in Kato's condition for convergence in [\cite{kato1984remarks}], the problem hinges upon controlling gradients of velocity along the boundary. The no-slip condition gives us no information to facilitate this control, however the Navier boundary conditions ensure that gradients are tractable at the boundary which ultimately enables one to deduce the convergence.\\

Item \ref{third one in itemy} is regarded as one of the most fundamental open problems in fluid mechanics. Based on the convergences in \ref{first one in itemy} and \ref{second one in itemy} it is natural to ask if the limits can be taken uniformly in $\nu$ and $\alpha$, respectively, to address \ref{third one in itemy}, though unsurprisingly there is a push-pull dynamic between them preventing it. This was considered in [\cite{kim2009large}]. It does, however, open up an alternative route to contemplate a regularisation by noise for \ref{third one in itemy}; whilst a first attempt at this problem would likely be to directly verify Kato's conditions for a well chosen noise, due to the poor interplay between noise and the no-slip condition a more promising path could be to regularise the solutions under Navier boundary conditions in a way which allows the limits of \ref{first one in itemy} and \ref{second one in itemy} to be taken uniformly in one another.

\subsection{Main Contributions and Relation to the Literature} \label{main contributions}

Informally, the main contribution of this paper is stated below. The complete result is Theorem \ref{main theorem}.
\begin{itemize}
    \item For given $u_0 \in L^2_{x}$, the unique weak solutions of the stochastic Navier-Stokes equations (\ref{projected Ito Salt}) with Navier boundary conditions (\ref{navier boundary conditions}) converge, as $\alpha \rightarrow \infty$ uniformly, to the unique weak solution under no-slip condition (\ref{noslip con}) in probability in $C_tW^{-\varepsilon,2}_x \cap L^2_tL^2_x$ for any $0 < \varepsilon$.
\end{itemize}

As far as we are aware, there are currently no results on the infinite boundary friction limit for stochastic Navier-Stokes equations. Furthermore, there do not appear to be any results treating the infinite boundary friction limit towards $\textit{weak}$ solutions of even the deterministic Navier-Stokes equations. As reviewed in the previous subsection, the only known results concern the limit with an initial condition $u_0 \in W^{1,p}_x$ with $p > 2$, where in fact $u_0$ is also required to be zero on the boundary; this condition seems to be omitted in the statement of Theorem 2 of [\cite{kim2009large}], but the author uses strong solutions under no-slip (e.g. Lemma 2) so it is clearly required. The condition is also explicitly included in [\cite{kelliher2006navier}], Theorem 9.2. The additional regularity of the strong solution under no-slip is necessary for their work. Relaxing to an $L^2_x$ initial condition comes with the trade-off of a weaker topology of convergence, namely $C_tW^{-\varepsilon,2}_x \cap L^2_tL^2_x$ as opposed to $C_tL^2_x \cap L^2_tW^{1,2}_x$, which we find to be valuable and hope that it adds insight to the problem even deterministically.\\

Nevertheless, it is worthwhile to ask whether or not we could recover the $C_tL^2_x \cap L^2_tW^{1,2}_x$ convergence under a smoother initial condition which is zero at the boundary. This is currently out of reach, intrinsically tied to the fact that the existence of strong solutions of (\ref{projected Ito Salt}) under the no-slip condition is open. The issue lies in the Leray projector failing to preserve the zero-trace property, so the Galerkin Projections related to the Dirichlet Stokes Operator may diverge on the noise in the energy norm of strong solutions; this is further discussed in the conclusion of [\cite{goodair2025navier}], along with how the Navier boundary conditions do not present the same problem. The method in [\cite{kelliher2006navier}, \cite{kim2009large}] is to directly look at the $L^2_x$ norm of the difference of the solutions. Strong solutions for the no-slip condition are immediately necessary in this approach, as weak solutions are only understood by testing with zero-trace vector fields which is failed by the solutions with Navier boundary conditions. Beyond this, the additional spatial regularity is explicitly required to control the difference.\\

Lacking the ability to control the norm of the difference of solutions directly, we instead argue by relative compactness which is a new approach to this problem. Uniform estimates in $C_tL^2_x \cap L^2_tW^{1,2}_x$, along with an additional control over small time increments, allow us to deduce tightness in $C_tW^{-\varepsilon,2}_x \cap L^2_tL^2_x$. Further uniform estimates of the boundary integral multiplied by $\alpha$ will imply that the solutions must approach zero at the boundary, from which a trace inequality presents that the limit we obtained from tightness must be zero-trace and is hence identifiable as the unique weak solution of the equation under no-slip condition. Whilst the tightness argument only gives us convergence in law of a subsequence \textit{a priori}, uniqueness of the limit ensures that the full sequence converges in probability.\\

Without the \textit{a priori} existence of strong solutions to (\ref{projected Ito Salt}) under the no-slip condition, one could still look to show convergence in $C_tL^2_x \cap L^2_tW^{1,2}_x$ by a tightness approach demonstrating uniform bounds in $C_tW^{1,2}_x \cap L^2_tW^{2,2}_x$. Such an approach is, however, tantamount to proving the existence of strong solutions under the no-slip condition. Estimates in $C_tW^{1,2}_x \cap L^2_tW^{2,2}_x$ of the solution $u^{\alpha}$ to (\ref{projected Ito Salt}) under Navier boundary conditions (\ref{navier boundary conditions}) yields a contribution from the noise of the form
$$\int_0^t\inner{\alpha \mathcal{P}B_iu_s^{\alpha}}{\mathcal{P}B_iu_s^{\alpha}}_{L^2(\partial \mathscr{O};\R^2)}ds$$
which we cannot handle as $\alpha \rightarrow \infty$. Once more we are undone by the nonlocal nature of the Leray Projector in failing to preserve the zero trace property, as without $\mathcal{P}$ we could force the $(\xi_i)$ to go to zero sufficiently quickly at the boundary and this term would vanish.\\

Overall, the stochastic Navier-Stokes equations with Navier boundary conditions have seen fairly little attention. In [\cite{brzezniak2001stochastic}] the authors prove the well-posedness and inviscid limit of linearised and regularised Navier-Stokes equations with a somewhat general multiplicative noise, though one which does not allow for gradient dependency, under a specific choice of Navier boundary conditions reducing to the requirement that vorticity is zero on the boundary: we shall call this the `free-boundary condition'. This followed the work of Bessaih in [\cite{bessaih1999martingale}] which proved the well-posedness and inviscid limit for the full Navier-Stokes system with a linear multiplicative noise under the free-boundary condition. The zero viscosity limit for the general Navier boundary conditions has thus far only been determined with additive noise, courtesy of [\cite{cipriano2015inviscid}]. For transport-stretching noise, well-posedness for any large $\alpha$ and the inviscid limit under free-boundary condition was established by the author in [\cite{goodair2025navier}]. Other stochastic fluid equations, with a bounded noise, under Navier boundary conditions have been considered in [\cite{tahraoui2024local}, \cite{tahraoui2025optimal}, \cite{tawri2024stochastic}].

\section{Preliminaries} \label{sec prelims}

Ahead of the main proofs, we overview some preliminaries which will allow us to concretely state the main result and underpin the analysis moving forward. Subsection \ref{subs functional anal} introduces the relevant function spaces and key properties from the deterministic theory. Stochastic integration is recapped in Subsection \ref{subs stoch prelim} followed by the transport-stretching noise in Subsection \ref{subs transport stretch}. The key definitions and results can then be given in Subsection \ref{subs main results}.

\subsection{Functional Analytic Preliminaries} \label{subs functional anal}

This subsection is essentially a shortening of the more extended summary [\cite{goodair2025navier}] Subsection 1.2, and for even further details we suggest [\cite{acevedo2019stokes}, \cite{clopeau1998vanishing}, \cite{kelliher2006navier}] where many of these preliminary results are proven.\\

Let us first address the divergence-free and boundary conditions, which will be imposed on our solution through the function spaces in which it takes value. Recall that the divergence of a function $f \in W^{1,2}(\mathscr{O};\R^2)$ is defined by $\textnormal{div}f := \sum_{j=1}^2 \partial_jf^j$, and $f$ is said to be divergence-free if $\textnormal{div}f =0$. The rate of strain tensor $D$ appearing in (\ref{navier boundary conditions}) is a mapping $D: W^{1,2}(\mathscr{O};\R^2) \rightarrow L^{2}(\mathscr{O};\R^{2 \times 2})$ defined by $$f \mapsto \begin{bmatrix}
        \partial_1f^1 & \frac{1}{2}\left(\partial_1f^1 + \partial_2f^2\right)\\
        \frac{1}{2}\left(\partial_1f^1 + \partial_2f^2\right) & \partial_2f^2
    \end{bmatrix}
    $$
    or in component form, $(Df)^{k,l}:=\frac{1}{2}\left(\partial_kf^l + \partial_lf^k\right)$. 
\begin{definition}
We define $C^{\infty}_{0,\sigma}(\mathscr{O};\R^2)$ as the subset of $C^{\infty}(\mathscr{O};\R^2)$ of functions which are compactly supported and divergence-free. $L^2_\sigma$ is defined as the completion of $C^{\infty}_{0,\sigma}(\mathscr{O};\R^2)$ in $L^2(\mathscr{O};\R^2)$, and $W^{1,2}_{\sigma}$ as the completion of $C^{\infty}_{0,\sigma}(\mathscr{O};\R^2)$ in $W^{1,2}(\mathscr{O};\R^2)$. Moreover we introduce $\bar{W}^{1,2}_\sigma$ as the intersection of $W^{1,2}(\mathscr{O};\R^2)$ with $L^2_\sigma$ and $\bar{W}^{2,2}_{\alpha}$ by $$\bar{W}^{2,2}_{\alpha}:= \left\{f \in W^{2,2}(\mathscr{O};\R^2) \cap \bar{W}^{1,2}_{\sigma}: 2(Df)\underline{n} \cdot \underline{\iota} + \alpha f \cdot \underline{\iota} = 0 \textnormal{ on } \partial \mathscr{O}\right\}.$$
\end{definition}

\begin{remark} \label{new first labelled remark}
    $L^2_{\sigma}$ can be characterised as the subspace of $L^2(\mathscr{O};\R^2)$ of weakly divergence-free functions with normal component weakly zero at the boundary (see [\cite{robinson2016three}] Lemma 2.14). $W^{1,2}_{\sigma}$ is precisely the subspace of $W^{1,2}(\mathscr{O};\R^2)$ consisting of divergence-free and zero-trace functions (see [\cite{temam2001navier}] Theorem 1.6). 
\end{remark}


The Leray Projector $\mathcal{P}$ stated in the introduction can now be properly defined as the orthogonal projection in $L^2\left(\mathscr{O};\R^2\right)$ onto $L^2_{\sigma}$. We note that the Poincar\'{e} Inequality holds for the component mappings of functions in $\bar{W}^{1,2}_{\sigma}$, of course also true for $W^{1,2}_{\sigma}$, so we equip both spaces with the inner product $$\inner{f}{g}_1 := \sum_{j=1}^2 \inner{\partial_j f}{\partial_j g}$$ which is equivalent to the full $W^{1,2}(\mathscr{O};\R^2)$ inner product. Defining $\kappa \in C^2(\partial \mathscr{O};\R)$ to be the curvature of $\partial \mathscr{O}$, then as demonstrated in [\cite{kelliher2006navier}] equation (5.1) we have the important Green's type identity that for all $f \in \bar{W}^{2,2}_{\alpha}$, $\phi \in \bar{W}^{1,2}_{\sigma}$, \begin{equation} \label{eq greens for navier} \inner{\Delta f}{\phi}  = -\inner{f}{\phi}_1 + \inner{(\kappa - \alpha)f}{\phi}_{L^2(\partial \mathscr{O}; \R^2)}.\end{equation}
Here and throughout the text $\inner{\cdot}{\cdot}$ denotes $\inner{\cdot}{\cdot}_{L^2(\mathscr{O};\R^2)}$ and in general we will represent $W^{s,p}(\mathscr{O};\R^2)$ by $W^{s,p}$ and $\norm{\cdot}_{W^{s,p}(\mathscr{O};\R^2)}$ by $\norm{\cdot}_{W^{s,p}}$. To control the boundary integral we will make use of the classical inequality \begin{equation}
    \label{inequality from Lions}
    \norm{f}_{L^2(\partial \mathscr{O};\R^2)}^2 \leq c\norm{f}\norm{f}_{W^{1,2}},
\end{equation}
see for instance [\cite{lions1996mathematical}] pp.130, [\cite{kelliher2006navier}] equation (2.5). Returning to the nonlinear term, we shall frequently understand the $L^2$ inner product as a duality pairing between $L^{\frac{4}{3}}$ and $L^4$, justified by the observation that for every $\phi,f,g, \in W^{1,2}$,
  \begin{align} \nonumber
       \left\vert\inner{\mathcal{L}_{\phi}f}{g}\right\vert \leq \norm{\mathcal{L}_{\phi}f}_{L^{4/3}}\norm{g}_{L^4} &\leq c\sum_{k=1}^2\norm{\phi}_{L^4}\norm{\partial_kf}\norm{g}_{L^4} \leq c\norm{\phi}^{\frac{1}{2}}\norm{\phi}_1^{\frac{1}{2}}\norm{f}_1\norm{g}^{\frac{1}{2}}\norm{g}_1^{\frac{1}{2}}\label{a bound in align}
    \end{align}
having used H\"{o}lder's and Ladyzhenskaya's Inequalities. We also appreciate that the usual anti-symmetry property continues to hold for transport along vector fields which are divergence-free and only tangential to the boundary; that is for every $\phi \in \bar{W}^{1,2}_{\sigma}$, $f,g \in W^{1,2}$ we have that \begin{equation}\label{wloglhs}\inner{\mathcal{L}_{\phi}f}{g}= -\inner{f}{\mathcal{L}_{\phi}g}.\end{equation}
Now let us move on to the Stokes Operator $A$. Weak solutions of (\ref{projected Ito Salt}) are only required to belong to $W^{1,2}$, and given the derivative operator $D$ in the Navier Boundary Conditions (\ref{navier boundary conditions}) then such solutions are not regular enough to make sense of the boundary condition in terms of the trace. Therefore the boundary condition is not enforced in the regularity of the solution, but rather in the definition of the Stokes Operator on the solution space. More precisely whilst $A=-\mathcal{P}\Delta$ is universally defined on $W^{2,2}$, its extension to $\bar{W}^{1,2}_{\sigma}$ is greatly dependent on the dense domain of definition that one chooses to extend from. We recall from [\cite{clopeau1998vanishing}] Lemma 2.2 that $\bar{W}^{2,2}_{\alpha}$ is dense in $\bar{W}^{1,2}_{\sigma}$, and considering the Gelfand Triple
$$\bar{W}^{1,2}_{\sigma} \xhookrightarrow{} L^2_{\sigma} \xhookrightarrow{} \left(\bar{W}^{1,2}_{\sigma}\right)^*$$
we define $A_{\alpha}:\bar{W}^{1,2}_{\sigma} \rightarrow \left(\bar{W}^{1,2}_{\sigma}\right)^*$  by 
$$\inner{A_{\alpha}f}{\phi}_{\left(\bar{W}^{1,2}_{\sigma}\right)^* \times \bar{W}^{1,2}_{\sigma}} = \inner{f}{\phi}_1 - \inner{(\kappa - \alpha)f}{\phi}_{L^2(\partial \mathscr{O}; \R^2)}.$$
Due to (\ref{eq greens for navier}) if $f \in \bar{W}^{2,2}_{\alpha}$ then $A_{\alpha}f = Af$ so $A_{\alpha}$ is truly the unique continuous extension of $A$ from the dense subspace $\bar{W}^{2,2}_{\alpha}$. Similarly we define $A_{\infty} : W^{1,2}_{\sigma} \rightarrow \left(W^{1,2}_{\sigma}\right)^*$ by the more straightforward
$$\inner{A_{\infty}f}{\phi}_{\left(\bar{W}^{1,2}_{\sigma}\right)^* \times \bar{W}^{1,2}_{\sigma}} = \inner{f}{\phi}_1$$
which, due to the zero-trace condition, extends $A$ from $W^{1,2}_{\sigma} \cap W^{2,2}$. To obtain optimal convergence results we consider fractional spaces defined through the spectrum of $A_{\infty}$; as a consequence of the Rellich-Kondrachov Theorem, see for instance [\cite{robinson2016three}] Theorem 2.24, there exists a collection of functions $(a_k)$, $a_k \in W^{1,2}_{\sigma} \cap C^{\infty}\left(\bar{\mathscr{O}};\R^2\right)$ such that the $(a_k)$ are eigenfunctions of $A$, are an orthonormal basis in $L^2_{\sigma}$ and with corresponding eigenvalues $(\lambda_k)$ strictly positive and approaching infinity as $k \rightarrow \infty$.\footnote{Note that we could also find a basis of eigenfunctions belonging to $\bar{W}^{2,2}_{\alpha}$, see [\cite{clopeau1998vanishing}] Lemma 2.2, which would define the spectrum of $A_{\alpha}$.} For $0 \leq \varepsilon \leq 1$ we define
$$W^{\varepsilon,2}_{\sigma}:= \left\{f \in L^2_{\sigma}: \sum_{k=1}^\infty\lambda_k^{\varepsilon}\inner{f}{a_k}^2 < \infty \right\}.$$
This space does indeed coincide with $L^2_{\sigma}$, $W^{1,2}_{\sigma}$ for $\varepsilon = 0,1$ respectively, and for every $\varepsilon >0$ we have that $W^{\varepsilon,2}_{\sigma}$ compactly embeds into $L^2_{\sigma}$. Moreover in the context of the Gelfand Triple
$$W^{\varepsilon,2}_{\sigma} \xhookrightarrow{} L^2_{\sigma} \xhookrightarrow{} \left(W^{\varepsilon,2}_{\sigma}\right)^*$$
 we define $W^{-\varepsilon,2}_{\sigma}:= \left(W^{\varepsilon,2}_{\sigma}\right)^*.$

\subsection{Stochastic Preliminaries} \label{subs stoch prelim}

Let $(\Omega,\mathcal{F},(\mathcal{F}_t), \mathbbm{P})$ be a fixed filtered probability space satisfying the usual conditions of completeness and right continuity. We take $\mathcal{W}$ to be a cylindrical Brownian motion over some Hilbert Space $\mathfrak{U}$ with orthonormal basis $(e_i)$. Recall (e.g. [\cite{lototsky2017stochastic}], Definition 3.2.36) that $\mathcal{W}$ admits the representation $\mathcal{W}_t = \sum_{i=1}^\infty e_iW^i_t$ as a limit in $L^2(\Omega;\mathfrak{U}')$ whereby the $(W^i)$ are a collection of i.i.d. standard real valued Brownian Motions and $\mathfrak{U}'$ is an enlargement of the Hilbert Space $\mathfrak{U}$ such that the embedding $J: \mathfrak{U} \rightarrow \mathfrak{U}'$ is Hilbert-Schmidt and $\mathcal{W}$ is a $JJ^*-$cylindrical Brownian Motion over $\mathfrak{U}'$. Given a process $F:[0,T] \times \Omega \rightarrow \mathscr{L}^2(\mathfrak{U};\mathscr{H})$ progressively measurable and such that $F \in L^2\left(\Omega \times [0,T];\mathscr{L}^2(\mathfrak{U};\mathscr{H})\right)$, for any $0 \leq t \leq T$ we define the stochastic integral $$\int_0^tF_sd\mathcal{W}_s:=\sum_{i=1}^\infty \int_0^tF_s(e_i)dW^i_s,$$ where the infinite sum is taken in $L^2(\Omega;\mathscr{H})$. We can extend this notion to processes $F$ which are such that $F(\omega) \in L^2\left( [0,T];\mathscr{L}^2(\mathfrak{U};\mathscr{H})\right)$ for $\mathbbm{P}-a.e.$ $\omega$ via the traditional localisation procedure. In this case the stochastic integral is a local martingale in $\mathscr{H}$. We defer to [\cite{goodair2024stochastic}] Chapter 2 for further details on this construction and properties of the stochastic integral. The stochastic integral of (\ref{projected Ito Salt}) is then understood by $\mathcal{P}Bu_s(e_i) = \mathcal{P}B_i(u_s)$, see [\cite{goodair2024stochastic}] Subchapter 3.2. We shall make frequent use of the Burkholder-Davis-Gundy Inequality ([\cite{da2014stochastic}] Theorem 4.36) and the energy identity (Proposition \ref{Ito formula}).

\subsection{Transport-Stretching Noise} \label{subs transport stretch}

We collect some fundamental properties of the transport-stretching noise $B_i$, as defined in (\ref{def of trans stretch}). We assume that $\xi_i \in L^2_{\sigma} \cap W^{2,\infty}$ with $\sum_{i=1}^\infty \norm{\xi_i}_{W^{2,\infty}}^2 < \infty$. The following properties are taken from  [\cite{goodair2022navier}] Subsection 2.3, where a more complete description is deferred to. Firstly for $k = 0, 1$ and $f \in W^{1,2}$ there exists a constant $c$ such that  
\begin{align}
    \label{T_ibound}\norm{\mathcal{T}_{\xi_i}f}_{W^{k,2}}^2 &\leq  c \norm{\xi_i}^2_{W^{k+1,\infty}}\norm{f}^2_{W^{k,2}}\\
    \label{L_ibound} \norm{\mathcal{L}_{\xi_i}f}_{W^{k,2}}^2 &\leq c\norm{\xi_i}^2_{W^{k,\infty}}\norm{f}^2_{W^{k+1,2}}\\
    \label{boundsonB_i} \norm{B_if}_{W^{k,2}}^2 &\leq c\norm{\xi_i}^2_{W^{k+1,\infty}}\norm{f}^2_{W^{k+1,2}}
\end{align}
Moreover $\mathcal{T}_{\xi_i}$ is a bounded linear operator on $L^2$ so has adjoint $\mathcal{T}_{\xi_i}^*$ satisfying the same boundedness. $\mathcal{L}_{\xi_i}$ is a densely defined operator in $L^2$ with domain of definition $W^{1,2}$, and has adjoint $\mathcal{L}_{\xi_i}^*$ in this space given by $-\mathcal{L}_{\xi_i}$ with same dense domain of definition, due to (\ref{wloglhs}). Likewise then $B_i^*$ is the densely defined adjoint $-\mathcal{L}_{\xi_i} + \mathcal{T}_{\xi_i}^*$. In addition, [\cite{goodair2022navier}] Proposition 2.6 shows that there exists a constant $c$ such that for all $f \in W^{1,2}$,
\begin{align} \label{noise bound 1}\inner{B_if}{B_i^*f} + \norm{B_if}^2 &\leq c\norm{\xi_i}_{W^{2,\infty}}^2\norm{f}^2,\\
\label{noise bound 2} \inner{B_if}{f}^2 &\leq c\norm{\xi_i}_{W^{1,\infty}}^2\norm{f}^4.
\end{align}
Similarly to $A_{\alpha}$, $\mathcal{P}B_i^2$ continuously extends to an operator from $\bar{W}^{1,2}_{\sigma}$ to $\left( \bar{W}^{1,2}_{\sigma}\right)^*$ by
$$\inner{\mathcal{P}B_i^2f}{\phi}_{\left( \bar{W}^{1,2}_{\sigma}\right)^* \times \bar{W}^{1,2}_{\sigma}} = \inner{B_if}{B_i^*\phi}.$$

\subsection{Definitions and Results} \label{subs main results}

As discussed in Subsection \ref{subs functional anal}, the definition of weak solutions is dependent on the extension of the Stokes Operator $A$ to $A_{\alpha}$, $A_{\infty}$. To this end, we consider (\ref{projected Ito Salt}) with Navier friction $\alpha_n$ by
\begin{equation} \label{NSalphan} \tag{$NS\alpha_n$}
    u^n_t = u_0 - \int_0^t\mathcal{P}\mathcal{L}_{u^n_s}u^n_s\ ds - \nu\int_0^t A_{\alpha_n} u^n_s\, ds + \frac{1}{2}\int_0^t\sum_{i=1}^\infty \mathcal{P}B_i^2u^n_s ds - \int_0^t \mathcal{P}Bu^n_s d\mathcal{W}_s
\end{equation}
and with no-slip condition by

\begin{equation} \label{NSinfty} \tag{$NS\infty$}
    u_t = u_0 - \int_0^t\mathcal{P}\mathcal{L}_{u_s}u_s\ ds - \nu\int_0^t A_{\infty} u_s\, ds + \frac{1}{2}\int_0^t\sum_{i=1}^\infty \mathcal{P}B_i^2u_s ds - \int_0^t \mathcal{P}Bu_s d\mathcal{W}_s.
\end{equation}
The index by $n$ in (\ref{NSalphan}) is in place to consider a sequence $\alpha_n \rightarrow \infty$. To state the main theorem we first settle the solution theory, which is given for any arbitrary but fixed time horizon $T > 0$.

\begin{definition} \label{def solution for NBC}
    Let  $u_0: \Omega \rightarrow L^2_{\sigma}$ be $\mathcal{F}_0-$measurable and $\alpha_n \in C^2(\partial \mathscr{O}; \R)$. A process $u^n$ which is progressively measurable in $\bar{W}^{1,2}_{\sigma}$ such that for $\mathbbm{P}-a.e.$ $\omega$, $u^n_{\cdot}(\omega) \in C\left([0,T];L^2_{\sigma}\right) \cap L^2\left([0,T];\bar{W}^{1,2}_{\sigma}\right)$, is said to be a weak solution of the equation (\ref{NSalphan}) if it satisfies the identity
\begin{align} \nonumber
     \inner{u^n_t}{\psi} = \inner{u_0}{\psi} - \int_0^{t}\inner{\mathcal{L}_{u^n_s}u^n_s}{\psi}ds &- \nu\int_0^{t} \inner{u^n_s}{\psi}_1 ds + \nu\int_0^t \inner{(\kappa - \alpha_n)u^n_s}{\psi}_{L^2(\partial \mathscr{O}; \R^2)}ds\\ &+ \frac{1}{2}\int_0^{t}\sum_{i=1}^\infty \inner{B_iu^n_s}{B_i^*\psi} ds - \int_0^{t} \inner{Bu^n_s}{\psi} d\mathcal{W}_s\label{identityindefinitionofspatiallyweak}
\end{align}
for every $\psi \in \bar{W}^{1,2}_{\sigma}$, $\mathbbm{P}-a.s.$ in $\R$ for all $t\in[0,T]$. Moreover $u^n$ is said to be unique if for any other such process $v^n$, $u^n$ and $v^n$ are indistinguishable.
\end{definition}

\begin{remark}
   As established, satisfaction of (\ref{identityindefinitionofspatiallyweak}) is equivalent to (\ref{NSalphan}) holding in $\left(\bar{W}^{1,2}_{\sigma}\right)^*$.
\end{remark}


\begin{definition} \label{def no slip}
        Let  $u_0: \Omega \rightarrow L^2_{\sigma}$ be $\mathcal{F}_0-$measurable. A process $u$ which is progressively measurable in $W^{1,2}_{\sigma}$ such that for $\mathbbm{P}-a.e.$ $\omega$, $u_{\cdot}(\omega) \in C\left([0,T];L^2_{\sigma}\right) \cap L^2\left([0,T];W^{1,2}_{\sigma}\right)$, is said to be a weak solution of the equation (\ref{NSinfty}) if it satisfies the identity
\begin{align} \nonumber
     \inner{u_t}{\phi} = \inner{u_0}{\phi} - \int_0^{t}\inner{\mathcal{L}_{u_s}u_s}{\phi}ds &- \nu\int_0^{t} \inner{u_s}{\phi}_1 ds\\ &+ \frac{1}{2}\int_0^{t}\sum_{i=1}^\infty \inner{B_iu_s}{B_i^*\phi} ds - \int_0^{t} \inner{Bu_s}{\phi} d\mathcal{W}_s\label{identityindefinitionofspatiallyweaknoslip}
\end{align}
for every $\phi \in W^{1,2}_{\sigma}$, $\mathbbm{P}-a.s.$ in $\R$ for all $t\in[0,T]$. Moreover $u$ is said to be unique if for any other such process $v$, $u$ and $v$ are indistinguishable.
\end{definition}

Similarly to the previous remark, satisfaction of (\ref{identityindefinitionofspatiallyweaknoslip}) is equivalent to (\ref{NSinfty}) holding in $W^{-1,2}_{\sigma}$. 

\begin{proposition} \label{existence results for both}
    Let  $u_0: \Omega \rightarrow L^2_{\sigma}$ be $\mathcal{F}_0-$measurable. Then there exists a unique weak solution of the equation (\ref{NSinfty}). Furthermore for any given $\alpha_n \in C^2(\partial \mathscr{O}; \R)$, there exists a unique weak solution of the equation (\ref{NSalphan}). 
\end{proposition}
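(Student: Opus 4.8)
The plan is to establish both existence statements by a Galerkin approximation followed by a stochastic compactness argument, and to obtain uniqueness through a pathwise energy estimate on the difference of two solutions; the two equations are handled by the same scheme and differ only in the choice of spectral basis and the presence of a boundary term. For (\ref{NSinfty}) I would project onto the span of the first $N$ eigenfunctions $(a_k)$ of $A_{\infty}$, and for (\ref{NSalphan}) onto the analogous eigenbasis of $A_{\alpha_n}$ (available by the footnote following the spectral construction). On each finite-dimensional space the projected system is an SDE with locally Lipschitz coefficients, so it admits a unique local solution which the a priori estimates below extend globally.

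The heart of the existence proof is a uniform bound in $C([0,T];L^2_{\sigma}) \cap L^2([0,T];\bar{W}^{1,2}_{\sigma})$. Applying the energy identity (Proposition \ref{Ito formula}) to $\norm{u^N}^2$, the nonlinear contribution vanishes by the antisymmetry (\ref{wloglhs}), since the Galerkin iterate is divergence-free and tangent to the boundary. The viscous term produces $-2\nu\norm{u^N}_1^2$ together with the boundary integral $2\nu\inner{(\kappa-\alpha_n)u^N}{u^N}_{L^2(\partial \mathscr{O};\R^2)}$ in the Navier case, which I would control using the trace inequality (\ref{inequality from Lions}) and Young's inequality, absorbing a fraction of $\norm{u^N}_1^2$ and leaving a term proportional to $\norm{u^N}^2$ for Gronwall (the no-slip case has no such term). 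The crucial cancellation is between the It\^{o} correction $\sum_i \inner{B_iu^N}{B_i^*u^N}$ and the quadratic variation $\sum_i \norm{\mathcal{P}B_iu^N}^2$ of the stochastic integral: since $\norm{\mathcal{P}B_iu^N} \leq \norm{B_iu^N}$, their sum is bounded by $\sum_i\left(\inner{B_iu^N}{B_i^*u^N} + \norm{B_iu^N}^2\right) \leq c\norm{u^N}^2$ via (\ref{noise bound 1}), so the second-order unbounded part of the transport-stretching noise never enters the energy balance. A Gronwall argument in expectation, together with the Burkholder-Davis-Gundy inequality for the supremum in time, then yields the uniform estimate.

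For existence I would next extract fractional-in-time H\"{o}lder bounds on the Galerkin iterates viewed in a negative Sobolev space, using the equation for the drift and the noise bound (\ref{boundsonB_i}) for the martingale part, and combine these with the uniform energy bound to deduce tightness of the laws in, say, $C([0,T];W^{-\delta,2}_{\sigma}) \cap L^2([0,T];L^2_{\sigma})$ via an Aubin--Lions--Simon compact embedding. Prokhorov and a Skorokhod representation give an almost surely convergent subsequence on a new probability space, and the limit is identified as a weak solution by passing to the limit in each term of (\ref{identityindefinitionofspatiallyweak}) resp. (\ref{identityindefinitionofspatiallyweaknoslip}); the only delicate passage is the nonlinear term, for which the strong $L^2_tL^2_x$ convergence suffices. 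Pathwise uniqueness, which by Yamada--Watanabe upgrades the martingale solution to the probabilistically strong and law-unique weak solution demanded by Definitions \ref{def solution for NBC} and \ref{def no slip}, follows by applying the energy identity to the difference $w = u - v$ of two solutions: the nonlinear difference reduces to $\inner{\mathcal{L}_w u}{w}$ after the surviving symmetric part cancels, this is estimated through (\ref{a bound in align}) by $c\norm{w}\norm{w}_1\norm{u}_1$ and Young's inequality into $\tfrac{\nu}{2}\norm{w}_1^2 + c\norm{u}_1^2\norm{w}^2$, while the linear noise terms are again controlled by (\ref{noise bound 1}); since $\int_0^T\norm{u_s}_1^2\,ds < \infty$ almost surely, a stochastic Gronwall inequality forces $w \equiv 0$.

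I expect the main obstacle to be the identification of the limit as an honest weak solution rather than the a priori estimates, which are essentially dictated by the cancellation (\ref{noise bound 1}). Because the noise is of transport-stretching type and hence unbounded on $L^2_{\sigma}$, the convergence of the stochastic integral $\int_0^{\cdot}\mathcal{P}Bu^N_s\,d\mathcal{W}_s$ to $\int_0^{\cdot}\mathcal{P}Bu_s\,d\mathcal{W}_s$ cannot be read off from weak convergence alone and requires the strong $L^2_tL^2_x$ convergence supplied by the compactness step together with the linear-in-$u$ structure of $\mathcal{P}B$; care is also needed that the Galerkin projections interact correctly with the boundary term and with the operators $A_{\alpha_n}$, $A_{\infty}$, which are defined only in the dual spaces. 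Given the machinery already available in the cited literature much of this can be imported, and for fixed $\alpha_n$ the boundary term poses no genuine difficulty, in contrast to the $\alpha_n \rightarrow \infty$ analysis that is the true subject of the paper.
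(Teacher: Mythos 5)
Your outline is mathematically sound, but it is worth knowing that the paper does not prove Proposition \ref{existence results for both} this way at all: its ``proof'' is a two-line citation, deferring existence and uniqueness for (\ref{NSinfty}) to [\cite{goodair2023zero}] Theorem 1.10 and for (\ref{NSalphan}) to [\cite{goodair2025navier}] Theorem 1.14, and spending its only substantive sentence on a point you do not flag, namely that those references assume the vector fields $\xi_i$ are zero-trace ($\xi_i \in W^{1,2}_{\sigma}$) whereas here only $\xi_i \in L^2_{\sigma}\cap W^{2,\infty}$ is assumed; the paper observes that the weak solution theory survives this relaxation because the antisymmetry (\ref{wloglhs}) and the resulting cancellations (\ref{noise bound 1}), (\ref{noise bound 2}) require only that $\xi_i$ be divergence-free and tangent to the boundary. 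Your reconstruction is essentially the argument those cited works carry out (Galerkin approximation, the $L^2$ energy cancellation between the It\^{o} corrector and the quadratic variation via (\ref{noise bound 1}), stochastic compactness, and pathwise uniqueness through (\ref{a bound in align}) plus a stochastic Gr\"{o}nwall argument, upgraded by Yamada--Watanabe or a Gy\"{o}ngy--Krylov argument to the probabilistically strong solutions demanded by the definitions), and your handling of the fixed-$\alpha_n$ boundary integral by the trace inequality (\ref{inequality from Lions}) and Young is the right general-purpose estimate, since for an arbitrary $\alpha_n \in C^2(\partial\mathscr{O};\R)$ one cannot simply drop the term by sign as the paper does later for large $n$. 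What the citation route buys is brevity and the isolation of the one genuinely new verification (the trace assumption on $\xi_i$); what your route buys is a self-contained account, at the cost of reproving known results --- but since your estimates only ever invoke (\ref{wloglhs}) and (\ref{noise bound 1})--(\ref{noise bound 2}), which hold for the relaxed $\xi_i$, your argument implicitly covers the paper's one adjustment anyway.
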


\begin{proof}
    Existence and uniqueness of weak solutions for (\ref{NSinfty}) was proven in [\cite{goodair2023zero}] Theorem 1.10, and for (\ref{NSalphan}) in [\cite{goodair2025navier}] Theorem 1.14. The only adjustment to these results is that fact that we have only assumed each $\xi_i \in \bar{W}^{1,2}_{\sigma}$ instead of $W^{1,2}_{\sigma}$, that is $\xi_i$ need not be zero-trace. Whilst a rapid decay to zero at the boundary is necessary for the existence of \textit{strong} solutions to (\ref{NSalphan}) in [\cite{goodair2025navier}], due to (\ref{wloglhs}) and the resultant bounds (\ref{noise bound 1}), (\ref{noise bound 2}), the zero-trace condition on $\xi_i$ is actually unnecessary for the weak solution theory in both cases and Proposition \ref{existence results for both} holds.
\end{proof}

We now state the main theorem.

\begin{theorem} \label{main theorem}
    Let $u_0 \in L^2\left(\Omega; L^2_{\sigma} \right)$ be $\mathcal{F}_0-$measurable, and $(\alpha_n)$ a sequence in $C^2\left(\partial \mathscr{O};\R \right)$ which is such that $\inf_{x \in \partial \mathscr{O}} \alpha_n(x)$ diverges to infinity and for sufficiently large $N$, $$\sup_{n \geq N}\left[\frac{\sup_{\partial \mathscr{O}} \alpha_n}{\inf_{\partial \mathscr{O}} \alpha_n} \right] < \infty. $$ Let $(u^n)$ be the corresponding sequence of unique weak solutions to (\ref{NSalphan}), and $u$ the unique weak solution to (\ref{NSinfty}). Then for any $0 < \varepsilon \leq 1$, $(u^n)$ converges to $u$ in probability in $C\left([0,T];W^{-\varepsilon,2}_{\sigma} \right) \cap L^2\left([0,T];L^2_{\sigma}\right)$. 
\end{theorem}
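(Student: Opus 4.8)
The plan is to abandon any attempt to estimate $\|u^n - u\|$ directly --- which would require the strong no-slip solutions that are unavailable here --- and instead argue by relative compactness. I would establish uniform-in-$n$ a priori bounds strong enough to force tightness of the laws of $(u^n)$ in $\mathcal{X} := C([0,T];W^{-\varepsilon,2}_{\sigma}) \cap L^2([0,T];L^2_{\sigma})$, extract a subsequential limit, show that a friction-weighted energy control pins this limit to the zero-trace space so that it is a weak solution of (\ref{NSinfty}), and finally invoke uniqueness (Proposition \ref{existence results for both}) together with a Gy\"{o}ngy--Krylov argument to promote convergence in law of a subsequence to convergence in probability of the whole sequence.

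First the a priori estimates. Applying the energy identity (Proposition \ref{Ito formula}) to $\|u^n\|^2$, the nonlinear term drops by the antisymmetry (\ref{wloglhs}), while the Stokes term, via the Green identity (\ref{eq greens for navier}), contributes $-2\nu\|u^n_s\|_1^2 + 2\nu\langle(\kappa-\alpha_n)u^n_s,u^n_s\rangle_{L^2(\partial\mathscr{O};\R^2)}$. For $n$ large, $\inf_{\partial\mathscr{O}}\alpha_n$ exceeds $\|\kappa\|_{\infty}$, so the boundary contribution is a genuine dissipation; the hypothesis on $\sup_{\partial\mathscr{O}}\alpha_n/\inf_{\partial\mathscr{O}}\alpha_n$ is what lets me absorb the spatially varying curvature uniformly in $n$. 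The noise terms are bounded through (\ref{noise bound 1})--(\ref{noise bound 2}), and the Burkholder--Davis--Gundy inequality controls the martingale. Gronwall then yields, uniformly in $n$,
\begin{equation*}
\E\sup_{t\le T}\|u^n_t\|^2 + \E\int_0^T\|u^n_s\|_1^2\,ds + \E\int_0^T\langle\alpha_n u^n_s,u^n_s\rangle_{L^2(\partial\mathscr{O};\R^2)}\,ds \le C.
\end{equation*}
The final term is the crucial extra estimate: dividing by $\inf_{\partial\mathscr{O}}\alpha_n\to\infty$ gives $\E\int_0^T\|u^n_s\|^2_{L^2(\partial\mathscr{O};\R^2)}\,ds\to0$, so the boundary trace is asymptotically annihilated.

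For tightness the essential observation is that the drift must be measured in a space dual to zero-trace fields. Since $W^{\varepsilon,2}_{\sigma}$ consists of zero-trace functions for $\varepsilon>1/2$ (being the domain of $A_{\infty}^{\varepsilon/2}$), testing $A_{\alpha_n}u^n_s$ against such $\phi$ kills the boundary term entirely, leaving $\langle A_{\alpha_n}u^n_s,\phi\rangle=\langle u^n_s,\phi\rangle_1$ and hence $\|A_{\alpha_n}u^n_s\|_{W^{-1,2}_{\sigma}}\le\|u^n_s\|_1$ uniformly in $n$; this is precisely what sidesteps the blow-up one would incur in $(\bar W^{1,2}_{\sigma})^*$. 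The nonlinear term is bounded in $L^{4/3}([0,T];W^{-1,2}_{\sigma})$ through (\ref{a bound in align}), the corrector in $L^2([0,T];W^{-1,2}_{\sigma})$, and the martingale part in $W^{\beta,2}([0,T];L^2_{\sigma})$ for $\beta<1/2$ using the $\mathscr{L}^2(\mathfrak{U};L^2_{\sigma})$ bound on $\mathcal{P}Bu^n$. Interpolating the resulting equicontinuity in $W^{-1,2}_{\sigma}$ with the uniform $C_tL^2$ bound produces equicontinuity in $W^{-\varepsilon,2}_{\sigma}$ for every $\varepsilon\in(0,1]$; combined with the compact embeddings $L^2_{\sigma}\hookrightarrow\hookrightarrow W^{-\varepsilon,2}_{\sigma}$ and an Aubin--Lions argument for the $L^2_tL^2_{\sigma}$ component, Prokhorov's theorem gives tightness of $(u^n)$, and of the pairs $(u^n,u^m,\mathcal{W})$, in $\mathcal{X}$.

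Finally I would pass to the limit and identify it. Along a subsequence, Skorokhod representation furnishes a.s. convergence on a new probability space; the strong $L^2_tL^2_{\sigma}$ convergence handles the quadratic nonlinearity and the standard stochastic-integral convergence lemma handles the martingale, so the limit $\bar u$ satisfies the weak identity against every zero-trace $\phi$ --- which, since the boundary term is invisible to such $\phi$, is exactly (\ref{identityindefinitionofspatiallyweaknoslip}). Weak lower semicontinuity gives $\bar u\in L^2([0,T];W^{1,2})$, and $\E\int_0^T\|u^n_s\|^2_{L^2(\partial\mathscr{O};\R^2)}\,ds\to0$ together with the trace inequality (\ref{inequality from Lions}) forces $\bar u$ to be zero-trace, i.e. $\bar u\in L^2([0,T];W^{1,2}_{\sigma})$, so $\bar u$ is a weak solution of (\ref{NSinfty}). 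By uniqueness (Proposition \ref{existence results for both}) every subsequential limit has the law of $u$; applying this to the joint limits of $(u^n,u^m)$, pathwise uniqueness places the limiting law on the diagonal, and the Gy\"{o}ngy--Krylov characterisation upgrades convergence in law to convergence in probability in $\mathcal{X}$ towards $u$. The main obstacle throughout is the Stokes boundary term: it blows up like $\alpha_n$ in the natural dual $(\bar W^{1,2}_{\sigma})^*$, and the whole argument hinges on the delicate balance of working in a weak enough topology (dual to zero-trace fields) to bound the drift uniformly, yet retaining strong enough $L^2_tL^2_{\sigma}$ compactness both to pass the nonlinearity and to read off, via the friction-weighted bound, that the limit lands in the no-slip space.
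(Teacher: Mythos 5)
Your proposal is correct and follows the same overall architecture as the paper: uniform energy estimates together with a friction-weighted boundary estimate, tightness in $C\left([0,T];W^{-\varepsilon,2}_{\sigma}\right) \cap L^2\left([0,T];L^2_{\sigma}\right)$, Skorokhod representation, identification of the limit as the zero-trace weak solution via the trace inequality (\ref{inequality from Lions}), and Gy\"{o}ngy--Krylov to upgrade convergence in law of subsequences to convergence in probability of the whole sequence. Where you genuinely diverge is the tightness step. The paper argues through increment estimates in expectation: Lemma \ref{Lemma 5.2} demands control of $\E\int_0^{T-\delta}\norm{u^n_{s+\delta}-u^n_s}^2ds$ in $L^2_{\sigma}$ itself, so the boundary term reappears in the increment energy identity and must be absorbed by a $\sup_{\partial\mathscr{O}}(\alpha_n-\kappa)$-weighted boundary bound --- this is precisely where the hypothesis on $\sup_{\partial\mathscr{O}}\alpha_n/\inf_{\partial\mathscr{O}}\alpha_n$ is spent, upgrading the $\inf$-weighted estimate that the energy identity naturally produces (rather than, as you suggest, absorbing the curvature); the $C_t$ component is then handled by an Aldous-type condition tested against $\phi\in W^{1,2}_{\sigma}$ (Lemma \ref{lemma for D tight}). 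You instead measure the drift in $\left(W^{1,2}_{\sigma}\right)^*$, where the boundary contribution of $A_{\alpha_n}$ vanishes against zero-trace test functions, and run a Flandoli--Gatarek/Aubin--Lions argument with fractional-in-time regularity of the martingale part plus interpolation. Both routes work: yours never confronts the boundary term in the compactness step (and so leans less heavily on the sup/inf ratio hypothesis), while the paper's avoids fractional Sobolev-in-time machinery and is self-contained given its quoted lemmas. Two minor points to tighten: the a priori and BDG arguments need the stopping-time localisation $\tau^R_n$ before expectations can be taken (removed afterwards by monotone convergence, as in Lemma \ref{lemma uniform}), and your parenthetical that $W^{\varepsilon,2}_{\sigma}$ is zero-trace for $\varepsilon>1/2$ is unnecessary --- all that matters is that $W^{1,2}_{\sigma}$ is dense in $W^{\varepsilon,2}_{\sigma}$, so the duality pairing defining $W^{-\varepsilon,2}_{\sigma}$ may be tested on zero-trace fields for every $\varepsilon\in(0,1]$.
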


\begin{remark}
    As a straightforward example, one could define the sequence of frictions by $\alpha_n := n$ everywhere on $\partial \mathscr{O}$.
\end{remark}

\section{Infinite Boundary Friction Limit} \label{section infinite boundary}

This section is devoted to the proof of Theorem \ref{main theorem}. Towards this goal we fix the assumed elements of Theorem \ref{main theorem}, namely the initial condition $u_0 \in L^2\left(\Omega;L^2_{\sigma}\right)$, the parameter $0 < \varepsilon \leq 1$ and the sequence $(\alpha_n)$. As the curvature $\kappa \in C^2(\partial \mathscr{O};\R)$ then it is bounded, so for simplicity in the following we shift the sequence $(\alpha_n)$ to consider only sufficiently large $n$ such that $0 \vee (\kappa + 1) < \alpha_n$ everywhere on $\partial \mathscr{O}$ and
\begin{equation} \label{sup bound} \sup_{n \in \N}\left[\frac{\sup_{\partial \mathscr{O}} \alpha_n}{\inf_{\partial \mathscr{O}} \alpha_n} \right] < \infty.\end{equation}
In particular,
\begin{equation} \label{sup bound with kappa} \sup_{n \in \N}\left[\frac{\sup_{\partial \mathscr{O}} (\alpha_n - \kappa)}{\inf_{\partial \mathscr{O}} (\alpha_n - \kappa)} \right] < \infty.\end{equation}
For various estimates in this section we will use the stopping times defined for $R > 0$ by
\begin{equation}  \tau^R_n:= T \wedge \inf\left\{s \geq 0: \sup_{r \in [0,s]}\norm{u^n_r}^2 + \int_0^{s}\norm{u^n_r}_{1}^2 dr + \left[\sup_{\partial \mathscr{O}}(\alpha_n - \kappa)\right]\int_0^s \norm{u^n_r}_{L^2(\partial \mathscr{O};\R^2)}^2dr \geq R \right\} 
\nonumber
\end{equation}
along with notation
\begin{equation} \label{check notation} \check{u}^n_{\cdot}:= u^n_{\cdot}\mathbbm{1}_{\cdot \leq \tau^R_n}, \qquad \hat{u}^n_{\cdot}:= u^n_{\cdot \wedge \tau^R_n}. \end{equation}
At first, in Subsection \ref{subs uniform estimates}, we shall prove uniform estimates on $(u^n)$ in the energy norm of weak solutions. This is followed by demonstrating that these solutions approach zero at the boundary in Subsection \ref{subs convergence to zero}, with tightness in $C\left([0,T];W^{-\varepsilon,2}_{\sigma} \right) \cap L^2\left([0,T];L^2_{\sigma}\right)$ the content of Subsection \ref{subs tightness}. We identify the limit obtained from tightness in Subsection \ref{subs identification of the limit} and conclude the proof in Subsection \ref{subs final steps}.

\subsection{Uniform Interior Estimates} \label{subs uniform estimates}

\begin{lemma} \label{lemma uniform}
    There exists a constant $C$ such that for all $n \in \N$,
    $$\mathbbm{E}\left(\sup_{r \in [0,T]}\norm{u^n_r}^2 + \int_0^T\norm{u^n_s}_1^2ds \right) \leq C\mathbbm{E}\left(\norm{u_0}^2 \right).$$
    \end{lemma}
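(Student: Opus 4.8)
The plan is to apply the energy identity of Proposition~\ref{Ito formula} to $\norm{u^n}^2$, evaluated at the stopped time $t \wedge \tau^R_n$ to secure the integrability needed to close the estimate, and then send $R \to \infty$. Inspecting each resulting term: the nonlinear contribution $-2\int_0^{\cdot}\inner{\mathcal{P}\mathcal{L}_{u^n_s}u^n_s}{u^n_s}\,ds$ vanishes identically, since $\mathcal{P}$ is the orthogonal projection onto $L^2_\sigma$, $u^n_s \in \bar{W}^{1,2}_\sigma$, and the antisymmetry (\ref{wloglhs}) gives $\inner{\mathcal{L}_{u^n_s}u^n_s}{u^n_s} = 0$. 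The viscous term, through the definition of $A_{\alpha_n}$, produces $-2\nu\norm{u^n_s}_1^2 - 2\nu\inner{(\alpha_n - \kappa)u^n_s}{u^n_s}_{L^2(\partial\mathscr{O};\R^2)}$; crucially, by the standing assumption $\alpha_n > \kappa + 1$ on $\partial\mathscr{O}$ the boundary integral is \emph{nonnegative}, hence dissipative, and may be discarded from the left-hand side. The It\^{o}--Stratonovich corrector together with the quadratic variation of the martingale combine into $\sum_i\big[\inner{B_iu^n_s}{B_i^*u^n_s} + \norm{\mathcal{P}B_iu^n_s}^2\big]\,ds$, which is dominated by $\sum_i[\inner{B_iu^n_s}{B_i^*u^n_s} + \norm{B_iu^n_s}^2]$ as $\mathcal{P}$ is a contraction, and thus by $c(\sum_i\norm{\xi_i}_{W^{2,\infty}}^2)\norm{u^n_s}^2 \leq C\norm{u^n_s}^2$ using (\ref{noise bound 1}) and the summability hypothesis on the $(\xi_i)$.

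This leaves, for each fixed $R$ and $n$,
$$\norm{u^n_{t\wedge\tau^R_n}}^2 + 2\nu\int_0^{t\wedge\tau^R_n}\norm{u^n_s}_1^2\,ds \leq \norm{u_0}^2 + C\int_0^{t\wedge\tau^R_n}\norm{u^n_s}^2\,ds - 2\int_0^{t\wedge\tau^R_n}\sum_i\inner{B_iu^n_s}{u^n_s}\,dW^i_s,$$
where I have used $\inner{\mathcal{P}B_iu^n_s}{u^n_s} = \inner{B_iu^n_s}{u^n_s}$. Taking the supremum over $t \in [0,T]$ and then expectations, the stopped stochastic integral is controlled by the Burkholder--Davis--Gundy inequality: its bracket obeys $\sum_i\inner{B_iu^n_s}{u^n_s}^2 \leq c(\sum_i\norm{\xi_i}_{W^{1,\infty}}^2)\norm{u^n_s}^4$ by (\ref{noise bound 2}), so BDG yields a bound of the form $C\,\mathbb{E}(\sup_{r\leq T\wedge\tau^R_n}\norm{u^n_r} \cdot (\int_0^{T\wedge\tau^R_n}\norm{u^n_s}^2\,ds)^{1/2})$. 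Young's inequality splits this into $\tfrac12\mathbb{E}\sup_{r\leq T\wedge\tau^R_n}\norm{u^n_r}^2$, finite by construction of $\tau^R_n$ and hence absorbed into the left-hand side, plus a term of the form $C\,\mathbb{E}\int_0^{T\wedge\tau^R_n}\norm{u^n_s}^2\,ds$. Gr\"{o}nwall's lemma applied to $t \mapsto \mathbb{E}\sup_{r\leq t\wedge\tau^R_n}\norm{u^n_r}^2$ then gives $\mathbb{E}\sup_{r\leq T\wedge\tau^R_n}\norm{u^n_r}^2 \leq e^{CT}\mathbb{E}\norm{u_0}^2$, and reinserting this bounds the gradient integral as well, with all constants depending only on $\nu$, $T$ and $\sum_i\norm{\xi_i}_{W^{2,\infty}}^2$. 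Finally, since $u^n_\cdot(\omega) \in C([0,T];L^2_\sigma)\cap L^2([0,T];\bar{W}^{1,2}_\sigma)$ for a.e. $\omega$ and the boundary integral defining $\tau^R_n$ is finite by the trace inequality (\ref{inequality from Lions}), we have $\tau^R_n \to T$ almost surely as $R \to \infty$, so monotone convergence transfers the bound to the unstopped quantities.

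The point I most want to emphasise, and what renders the estimate \emph{uniform in $n$}, is the favourable sign of the boundary contribution from $A_{\alpha_n}$: because $\alpha_n - \kappa > 0$, the only place $n$ could enter the constants is through a purely dissipative term that is simply discarded, while the noise bounds (\ref{noise bound 1}), (\ref{noise bound 2}) involve only the fixed fields $(\xi_i)$. The genuine technical care lies not in any single inequality but in the localisation: one must know a priori that $\mathbb{E}\sup_{r\leq T\wedge\tau^R_n}\norm{u^n_r}^2 < \infty$ before Young's inequality can legitimately move the supremum term across, which is precisely why the estimate is first carried out up to $\tau^R_n$ and $C$ is verified to be independent of both $R$ and $n$, so that passing to the limit $R \to \infty$ is harmless.
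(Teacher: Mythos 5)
Your proposal is correct and follows essentially the same route as the paper: the It\^{o} energy identity localised at $\tau^R_n$, cancellation of the nonlinear term, discarding the sign-definite boundary integral, the noise bounds (\ref{noise bound 1})--(\ref{noise bound 2}) with Burkholder--Davis--Gundy and Young's inequality to absorb the supremum, Gr\"{o}nwall, and monotone convergence in $R$. The only (welcome) addition is your explicit justification via (\ref{inequality from Lions}) that the boundary term in the definition of $\tau^R_n$ is a.s.\ finite, which the paper leaves implicit when asserting $\tau^R_n \to T$.
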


\begin{proof}
  We appreciate that use of the stopping time $\tau^R_n$ allows us to rigorously take expectations in verifying this estimate. Stopping $u^n$ at $\tau^R_n$ and applying the It\^{o} Formula, we deduce the energy equality
\begin{align}\nonumber\norm{u^{n}_{ r \wedge \tau^R_n}}^2 &= \norm{u_0}^2 - 2\int_0^{r\wedge \tau^R_n}\inner{\mathcal{L}_{u^n_s}u^n_s}{u^n_s}ds\\ \nonumber &- 2\nu\int_0^{r\wedge \tau^R_n}\norm{u^n_s}_1^2ds + 2\nu\int_0^{r\wedge \tau^R_n}\inner{(\kappa - \alpha_n)u^n_s}{u^n_s}_{L^2(\partial \mathscr{O};\R^2)}ds\\ \nonumber &+ \int_0^{r\wedge \tau^R_n} \sum_{i=1}^\infty \inner{B_iu^n_s}{B_i^*u^n_s}ds + \int_0^{r\wedge\tau^R_n}\sum_{i=1}^\infty\norm{\mathcal{P}B_iu^n_s}^2ds\\ &- 2\sum_{i=1}^\infty\int_0^{r\wedge\tau^R_n}\inner{B_iu^n_s}{u^n_s}dW^i_s \label{for the uniform estimates}
\end{align}
for every $r \in [0,T]$, $\mathbbm{P}-a.s.$. Using the cancellation in the nonlinear term, that $\kappa - \alpha_n < 0$ and the noise estimate (\ref{noise bound 1}), we quickly reduce to the inequality
\begin{align}\nonumber\norm{u^{n}_{ r \wedge \tau^R_n}}^2 + 2\nu\int_0^{r\wedge \tau^R_n}\norm{u^n_s}_1^2ds = \norm{u_0}^2 + c\int_0^{r\wedge \tau^R_n} \norm{u^n_s}^2 ds - 2\sum_{i=1}^\infty\int_0^{r\wedge\tau^R_n}\inner{B_iu^n_s}{u^n_s}dW^i_s. 
\end{align}
Note that to apply the noise estimate (\ref{noise bound 1}) we have first used that $\norm{\mathcal{P}B_iu^n_s}^2 \leq \norm{B_iu^n_s}^2$ given that $\mathcal{P}$ is an orthogonal projection. Now we bound by the absolute value, take the supremum over $r \in [0,t]$ followed by the Burkholder-Davis-Gundy Inequality, as well as plugging in the notation (\ref{check notation}) to see that
\begin{align}\nonumber\mathbbm{E}\left(\sup_{r \in [0,t]}\norm{\check{u}^{n}_{ r }}^2 + 2\nu\int_0^{t}\norm{\check{u}^n_s}_1^2ds\right) \leq c\mathbbm{E}\left[\norm{u_0}^2 + \int_0^{t} \norm{\check{u}^n_s}^2 ds + \left(\int_0^{t}\sum_{i=1}^\infty\inner{B_i\check{u}^n_s}{\check{u}^n_s}^2ds\right)^{\frac{1}{2}}\right]. 
\end{align}
The first term is consistent due to the continuity of $u^n$ in $L^2_{\sigma}$. In the last term we apply (\ref{noise bound 2}) and Young's Inequality to observe that
\begin{align} \nonumber
c\left(\int_0^{t}\sum_{i=1}^\infty\inner{B_i\check{u}^n_s}{\check{u}^n_s}^2ds\right)^{\frac{1}{2}} \leq c\left(\int_0^{t}\norm{\check{u}^n_s}^4ds\right)^{\frac{1}{2}} &\leq c\left(\sup_{r \in [0,t]}\norm{\check{u}^n_r}^2\int_0^{t}\norm{\check{u}^n_s}^2 ds\right)^{\frac{1}{2}}\\ \label{BDG type control}
&\leq \frac{1}{2}\left(\sup_{r \in [0,t]}\norm{\check{u}^n_r}^2\right) + c\left(\int_0^{t}\norm{\check{u}^n_s}^2 ds\right).
\end{align}
From here we deduce the bound
\begin{align}\nonumber\mathbbm{E}\left(\sup_{r \in [0,t]}\norm{\check{u}^{n}_{ r }}^2 + \int_0^{t}\norm{\check{u}^n_s}_1^2ds\right) \leq c\mathbbm{E}\left[\norm{u_0}^2 + \int_0^{t} \norm{\check{u}^n_s}^2 ds\right] 
\end{align}
noting that $c$ is also dependent on the fixed $\nu$, from which the standard Gr\"{o}nwall Inequality yields that
\begin{align}\nonumber\mathbbm{E}\left(\sup_{r \in [0,T]}\norm{\check{u}^{n}_{ r }}^2 + \int_0^{T}\norm{\check{u}^n_s}_1^2ds\right) \leq C\mathbbm{E}\left(\norm{u_0}^2\right).
\end{align}
It only remains to remove the localisation by $\tau^R_n$, to which we appreciate that the sequence $(\tau^R_n)$ is $\mathbbm{P}-a.s.$ increasing to $T$ due to the continuity of $u^n$; therefore we may apply the Monotone Convergence Theorem taking $R \rightarrow \infty$ which gives the result.
\end{proof}

\subsection{Convergence to Zero at the Boundary} \label{subs convergence to zero}

\begin{proposition} \label{prop boundary to zero}
There exists a constant $C$ such that for all $n \in \N$,
\begin{equation} \label{boundary estimate}
    \left[\sup_{\partial \mathscr{O}}(\alpha_n - \kappa)\right]\mathbbm{E}\left(\int_0^T \norm{u^n_s}_{L^2(\partial \mathscr{O};\R^2)}^2ds\right) \leq C\mathbbm{E}\left(\norm{u_0}^2\right)
\end{equation}
and in particular,
  \begin{equation} \label{zero limit}
   \lim_{n \rightarrow \infty} \mathbbm{E}\left(\int_0^T \norm{u^n_s}_{L^2\left(\partial \mathscr{O}; \R^2\right)}^2 ds \right) = 0.   
  \end{equation}

\end{proposition}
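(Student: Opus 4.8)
The plan is to revisit the It\^o energy equality (\ref{for the uniform estimates}) established in the proof of Lemma \ref{lemma uniform}, but this time retaining the boundary contribution rather than discarding it. There the term $2\nu\int_0^{r\wedge\tau^R_n}\inner{(\kappa-\alpha_n)u^n_s}{u^n_s}_{L^2(\partial\mathscr{O};\R^2)}ds$ was simply dropped, being non-positive once $\kappa-\alpha_n<0$; the key observation is that this very term supplies exactly the boundary quantity we wish to control, and that bounding it from below produces the infimum of $\alpha_n-\kappa$, which the ratio hypothesis (\ref{sup bound with kappa}) then upgrades to the supremum appearing in (\ref{boundary estimate}).

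Concretely, I would first use the nonlinear cancellation $\inner{\mathcal{L}_{u^n_s}u^n_s}{u^n_s}=0$ (a consequence of (\ref{wloglhs}) with $\phi=f=g=u^n_s$) together with the noise estimate (\ref{noise bound 1}) and $\norm{\mathcal{P}B_iu^n_s}^2\leq\norm{B_iu^n_s}^2$ to dominate the combined correction and quadratic-variation terms by $c\norm{u^n_s}^2$. For the boundary term I would write $\inner{(\kappa-\alpha_n)u^n_s}{u^n_s}_{L^2(\partial\mathscr{O};\R^2)}=-\int_{\partial\mathscr{O}}(\alpha_n-\kappa)\abs{u^n_s}^2\,dS\leq-\left[\inf_{\partial\mathscr{O}}(\alpha_n-\kappa)\right]\norm{u^n_s}_{L^2(\partial\mathscr{O};\R^2)}^2$ and move the resulting positive quantity to the left-hand side. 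Evaluating at $r=T$ and taking expectations, the stochastic integral vanishes, being a genuine martingale on $[0,\tau^R_n]$ since (\ref{noise bound 2}) bounds its quadratic variation by $c\int_0^\cdot\norm{u^n_s}^4ds\leq cR^2T$; this is simpler than Lemma \ref{lemma uniform}, as no supremum and hence no Burkholder--Davis--Gundy step is needed. What remains is
\begin{equation*}
2\nu\left[\inf_{\partial\mathscr{O}}(\alpha_n-\kappa)\right]\mathbbm{E}\int_0^{T\wedge\tau^R_n}\norm{u^n_s}_{L^2(\partial\mathscr{O};\R^2)}^2ds \leq \mathbbm{E}\norm{u_0}^2 + c\,\mathbbm{E}\int_0^{T\wedge\tau^R_n}\norm{u^n_s}^2ds,
\end{equation*}
whose right-hand side is bounded by $C\mathbbm{E}(\norm{u_0}^2)$ uniformly in $n$ and $R$ by Lemma \ref{lemma uniform}. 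Sending $R\to\infty$ and invoking monotone convergence (the stopping times increase to $T$ by continuity of $u^n$) yields the same estimate with the integral over $[0,T]$, and the ratio bound (\ref{sup bound with kappa}) then replaces $\inf_{\partial\mathscr{O}}(\alpha_n-\kappa)$ by $\sup_{\partial\mathscr{O}}(\alpha_n-\kappa)$ at the cost of a finite multiplicative constant, giving (\ref{boundary estimate}).

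Finally, (\ref{zero limit}) is immediate: since $\inf_{\partial\mathscr{O}}\alpha_n\to\infty$ and $\kappa$ is bounded we have $\sup_{\partial\mathscr{O}}(\alpha_n-\kappa)\geq\inf_{\partial\mathscr{O}}\alpha_n-\sup_{\partial\mathscr{O}}\kappa\to\infty$, so dividing (\ref{boundary estimate}) through by this diverging factor forces $\mathbbm{E}\int_0^T\norm{u^n_s}_{L^2(\partial\mathscr{O};\R^2)}^2ds\to0$. I do not anticipate a genuine obstacle, as the argument is a short rearrangement of the already-derived energy equality; the only load-bearing points are the correct sign bookkeeping on the boundary integral and the recognition that the naturally-arising infimum must be converted to the supremum through the ratio assumption (\ref{sup bound with kappa}) --- this is precisely where the structural hypothesis of Theorem \ref{main theorem} on the $(\alpha_n)$ is consumed.
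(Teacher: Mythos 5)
Your proposal is correct and follows essentially the same route as the paper: both retain the boundary term in the energy equality (\ref{for the uniform estimates}), bound it below via $\inf_{\partial\mathscr{O}}(\alpha_n-\kappa)$, move it to the left-hand side, control the interior terms by Lemma \ref{lemma uniform}, upgrade the infimum to the supremum via (\ref{sup bound with kappa}), and remove the localisation by monotone convergence. Your observation that one may simply take expectations of the stopped martingale rather than repeating the Burkholder--Davis--Gundy step is a harmless minor streamlining, since the supremum in time is not needed for this estimate.
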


\begin{proof}
    We again use the energy equality (\ref{for the uniform estimates}), however now in the boundary integral we use a control
    \begin{align*}\int_0^{r\wedge \tau^R_n}\inner{(\kappa - \alpha_n)u^n_s}{u^n_s}_{L^2(\partial \mathscr{O};\R^2)}ds &\leq  \left[\sup_{ \partial \mathscr{O}}(\kappa - \alpha_n)\right]\int_0^{r\wedge \tau^R_n}\norm{u^n_s}^2_{L^2(\partial \mathscr{O};\R^2)}ds\\ &= -  \left[\inf_{\partial \mathscr{O}}(\alpha_n - \kappa)\right]\int_0^{r\wedge \tau^R_n}\norm{u^n_s}^2_{L^2(\partial \mathscr{O};\R^2)}ds.
    \end{align*}
Taking this term to the left hand side and following the steps of Lemma \ref{lemma uniform}, we arrive at
\begin{align}\nonumber\mathbbm{E}\left(\sup_{r \in [0,t]}\norm{\check{u}^{n}_{ r }}^2 + \int_0^{T}\norm{\check{u}^n_s}_1^2ds + \inf_{\partial \mathscr{O}}(\alpha_n - \kappa)\int_0^{T}\norm{\check{u}^n_s}^2_{L^2(\partial \mathscr{O};\R^2)}ds\right) \leq c\mathbbm{E}\left[\norm{u_0}^2 + \int_0^{T} \norm{\check{u}^n_s}^2 ds\right].
\end{align}
Simply ignoring the first two non-negative terms, and using Lemma \ref{lemma uniform} to control the last term, we obtain
\begin{align}\nonumber \left[\inf_{\partial \mathscr{O}}(\alpha_n - \kappa)\right]\mathbbm{E}\left(\int_0^{T}\norm{\check{u}^n_s}^2_{L^2(\partial \mathscr{O};\R^2)}ds\right) \leq c\mathbbm{E}\left(\norm{u_0}^2\right).
\end{align}
Now we multiply both sides by $\frac{\sup_{\partial \mathscr{O}}(\alpha_n - \kappa)}{\inf_{\partial \mathscr{O}}(\alpha_n - \kappa)}$, and using (\ref{sup bound with kappa}) verify that
\begin{align}\nonumber \left[\sup_{\partial \mathscr{O}}(\alpha_n - \kappa)\right]\mathbbm{E}\left(\int_0^{T}\norm{\check{u}^n_s}^2_{L^2(\partial \mathscr{O};\R^2)}ds\right) \leq c\frac{\sup_{\partial \mathscr{O}}(\alpha_n - \kappa)}{\inf_{\partial \mathscr{O}}(\alpha_n - \kappa)}\mathbbm{E}\left(\norm{u_0}^2\right) \leq C\mathbbm{E}\left(\norm{u_0}^2\right).
\end{align}
By the same application of the Monotone Convergence Theorem as at the end of Lemma \ref{lemma uniform}, we deduce (\ref{boundary estimate}). Then (\ref{zero limit}) follows by dividing both sides by $\sup_{\partial \mathscr{O}}(\alpha_n - \kappa)$ and taking the limit as $n \rightarrow \infty$, using that $\inf_{\partial \mathscr{O}}(\alpha_n)$ hence $\sup_{\partial \mathscr{O}}(\alpha_n - \kappa)$ diverges to infinity.

\end{proof}

\subsection{Tightness} \label{subs tightness}


\begin{proposition} \label{prop for tightness}
    For any $R > 1$,
    \begin{equation} \label{required condition}
    \lim_{\delta \rightarrow 0^+}\sup_{n \in \N}\mathbbm{E}\left(\int_0^{T-\delta}\norm{\hat{u}^{n}_{s + \delta} - \hat{u}^{n}_s}^2ds\right)= 0.
\end{equation} 
Moreover the sequence of the laws of $(u^n)$ is tight in the space of probability measures over\\ $L^2\left([0,T];L^2_{\sigma}\right)$.
\end{proposition}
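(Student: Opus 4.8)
The plan is to establish the time-increment estimate (\ref{required condition}) first and then deduce tightness by combining it with Lemma \ref{lemma uniform} through a Simon-type compactness criterion. The structural difficulty is that the drift of the stopped equation takes values only in the dual space $\left(\bar{W}^{1,2}_{\sigma}\right)^*$, so the $L^2_x$ increment cannot be estimated by naively pairing the drift against the increment. The device that circumvents this is to apply the energy identity (Proposition \ref{Ito formula}) to the increment itself, which regenerates precisely the antisymmetric and negative-definite cancellations already exploited in Lemma \ref{lemma uniform}.

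Concretely, I would fix $s$ and apply the Itô formula to $\norm{\hat{u}^n_t - \hat{u}^n_s}^2$ on $t \in [s, s+\delta]$, using that $\hat{u}^n$ solves the stopped form of (\ref{NSalphan}) with the indicator $\mathbbm{1}_{r \leq \tau^R_n}$ in every integral. Writing $v_r := \hat{u}^n_r - \hat{u}^n_s$ and noting that $\hat{u}^n_r = u^n_r$ whenever $r \leq \tau^R_n$, the self-paired contributions collapse: the nonlinear self-pairing $\inner{\mathcal{L}_{u^n_r}u^n_r}{u^n_r}$ vanishes by antisymmetry (\ref{wloglhs}), the Stokes self-pairing yields $-2\nu\norm{u^n_r}_1^2$ together with $2\nu\inner{(\kappa-\alpha_n)u^n_r}{u^n_r}_{L^2(\partial \mathscr{O};\R^2)} \leq 0$ since $\kappa - \alpha_n < 0$, and the Stratonovich corrector and quadratic-variation terms combine via (\ref{noise bound 1}) into a contribution bounded by $c\norm{u^n_r}^2 \leq cR$ on $\{r \leq \tau^R_n\}$. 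After taking expectations, the martingale part vanishes and this diagonal part contributes at most $cRT\delta$ after integrating in $s$. The key observation is that everything else is a cross term paired against the fixed $\hat{u}^n_s$, and since the integrand carries $\mathbbm{1}_{r \leq \tau^R_n}$ with $r \geq s$, only $s \leq \tau^R_n$ survives, where $\hat{u}^n_s = u^n_s$; thus the awkward frozen value $u^n_{\tau^R_n}$ never appears, and every factor is controlled by the clean stopped quantities $\sup_{r \leq \tau^R_n}\norm{u^n_r}^2 \leq R$ and $\int_0^{\tau^R_n}\norm{u^n_r}_1^2\,dr \leq R$.

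I would then show each cross term is $O(\delta^{1/2})$ uniformly in $n$. For the nonlinear cross term $\inner{\mathcal{L}_{u^n_r}u^n_r}{u^n_s}$ I would move a derivative onto $u^n_s$ via (\ref{wloglhs}) and apply Ladyzhenskaya's inequality to get $\leq c\sqrt{R}\,\norm{u^n_r}_1\norm{u^n_s}_1$; the interior Stokes cross term and, using (\ref{boundsonB_i}), the noise cross term are treated identically, and a Cauchy--Schwarz in $r$ over the interval of length $\delta$ followed by Cauchy--Schwarz in $s$, using $\int_0^{\tau^R_n}\norm{u^n_r}_1^2\,dr \leq R$, produces the $\delta^{1/2}$ gain. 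The genuinely delicate term, and the main obstacle of the proposition, is the boundary part $\nu\inner{(\kappa-\alpha_n)u^n_r}{u^n_s}_{L^2(\partial \mathscr{O};\R^2)}$ of the Stokes cross term, which carries the diverging factor $M_n := \sup_{\partial \mathscr{O}}(\alpha_n - \kappa)$. Here I would split $M_n = M_n^{1/2} \cdot M_n^{1/2}$ and absorb one half into each boundary integral, each of which is controlled precisely by the third term $\left[\sup_{\partial \mathscr{O}}(\alpha_n-\kappa)\right]\int_0^{\tau^R_n}\norm{u^n_r}^2_{L^2(\partial \mathscr{O};\R^2)}\,dr \leq R$ built into $\tau^R_n$. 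This matching of the diverging friction against the boundary control encoded in the stopping time is exactly why that contribution was placed in $\tau^R_n$, and it is what renders the bound uniform in $n$; collecting everything gives (\ref{required condition}) with a bound of the form $C(R,T,\nu)\delta^{1/2}$.

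For the tightness claim I would invoke a Simon-type (Kolmogorov--Riesz--Fréchet) compactness criterion in $L^2\left([0,T];L^2_\sigma\right)$: relative compactness follows from a uniform bound in $L^2\left([0,T];\bar{W}^{1,2}_{\sigma}\right)$, which supplies spatial compactness through the compact embedding $\bar{W}^{1,2}_{\sigma} \hookrightarrow L^2_\sigma$, together with the temporal equicontinuity encoded in (\ref{required condition}). Using Lemma \ref{lemma uniform} and (\ref{required condition}) with Markov's inequality, for each $\eta > 0$ I would build a compact set $K$ (a sublevel set of $\int_0^T \norm{\cdot}_1^2$ intersected with increment sublevel sets along a sequence $\delta_k \downarrow 0$) with $\sup_n \mathbbm{P}\left(\hat{u}^n \notin K\right) < \eta$. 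Finally, to pass from $\hat{u}^n$ to $u^n$, I would note that $\mathbbm{P}\left(u^n \neq \hat{u}^n \text{ on } [0,T]\right) = \mathbbm{P}\left(\tau^R_n < T\right)$, and that the expectation of the entire functional defining $\tau^R_n$ is bounded uniformly in $n$ by Lemma \ref{lemma uniform} and Proposition \ref{prop boundary to zero}; Markov's inequality then yields $\sup_n \mathbbm{P}\left(\tau^R_n < T\right) \to 0$ as $R \to \infty$, so enlarging $R$ makes $u^n$ coincide with $\hat{u}^n$ with probability arbitrarily close to one, and tightness of the laws of $(u^n)$ in $L^2\left([0,T];L^2_\sigma\right)$ follows.
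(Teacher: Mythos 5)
Your proposal matches the paper's proof essentially step for step: the energy identity applied to the increment, antisymmetry to remove the nonlinear self-pairing, the sign of $\kappa - \alpha_n$ to discard the boundary self-pairing, the splitting of $\left[\sup_{\partial\mathscr{O}}(\alpha_n-\kappa)\right]$ into two square roots matched against the boundary control built into $\tau^R_n$, and a Cauchy--Schwarz in $r$ over the interval of length $\delta$ to extract the $\delta^{1/2}$ gain. The tightness deduction you sketch is precisely what the paper's Lemma \ref{Lemma 5.2} together with the cited de-localization argument carries out, so the two proofs coincide.
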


\begin{proof}
Let us first establish why (\ref{required condition}) is sufficient to deduce the desired tightness. We are looking to apply Lemma \ref{Lemma 5.2} for the spaces $\mathcal{H}_1:= \bar{W}^{1,2}_{\sigma}$, $\mathcal{H}_2:= L^2_{\sigma}$, noting that condition (\ref{first condition}) is satisfied due to Lemma \ref{lemma uniform}. Then (\ref{required condition}) is nearly the remaining (\ref{second condition}), except for the localisation at $\tau^R_n$; due to the uniform estimates from Lemma \ref{lemma uniform} then it is sufficient to show only (\ref{required condition}), which was demonstrated in [\cite{rockner2022well}] Lemma 2.12, [\cite{goodair2024weak}] Subsection 2.4.\\

We now turn to verifying (\ref{required condition}). Similarly to (\ref{for the uniform estimates}), for any fixed $0 \leq s < T$, by considering the evolution equation (in $\delta$) satisfied by the increment, we observe the energy equality
\begin{align}\nonumber\norm{\hat{u}^{n}_{s + \delta} - \hat{u}^{n}_{s}}^2 &= - 2\int_s^{s+\delta}\inner{\mathcal{L}_{\check{u}^n_r}\check{u}^n_r}{\check{u}^n_r - \check{u}^n_s}dr\\ \nonumber &- 2\nu\int_s^{s+\delta}\inner{\check{u}^n_r}{\check{u}^n_r - \check{u}^n_s}_1dr + 2\nu\int_s^{s+\delta}\inner{(\kappa - \alpha_n)\check{u}^n_r}{\check{u}^n_r - \check{u}^n_s}_{L^2(\partial \mathscr{O};\R^2)}dr\\ \nonumber &+ \int_s^{s+\delta} \sum_{i=1}^\infty \inner{B_i\check{u}^n_r}{B_i^*\left(\check{u}^n_r - \check{u}^n_s\right)}dr + \int_s^{s+\delta}\sum_{i=1}^\infty\norm{\mathcal{P}B_i\check{u}^n_r}^2dr\\ &- 2\sum_{i=1}^\infty\int_s^{s+\delta}\inner{B_i\check{u}^n_r}{\check{u}^n_r - \check{u}^n_s}dW^i_r. \label{big expression}
\end{align}
As we look to reduce each term we begin with the nonlinear operator, obtaining that
\begin{align}
    \nonumber \abs{\inner{\mathcal{L}_{\check{u}^n_r}\check{u}^n_r}{\check{u}^n_r - \check{u}^n_s}} = \abs{\inner{\check{u}^n_r}{\mathcal{L}_{\check{u}^n_r}\check{u}^n_s}} \leq \norm{\check{u}^n_r}_{L^4}\norm{\mathcal{L}_{\check{u}^n_r}\check{u}^n_s}_{L^{\frac{4}{3}}} &\leq c\norm{\check{u}^n_r}_{L^4}\norm{\check{u}^n_r}_{L^4}\norm{\check{u}^n_s}_{1}\\ &\leq c \norm{\check{u}^n_r}\norm{\check{u}^n_r}_1\norm{\check{u}^n_s}_{1} \nonumber
\end{align}
having utilised Ladyzhenskaya's Inequality. In the following term, 
$$ -\inner{\check{u}^n_r}{\check{u}^n_r - \check{u}^n_s}_1 \leq \inner{\check{u}^n_r}{\check{u}^n_s}_1 \leq \norm{\check{u}^n_r }_1\norm{\check{u}^n_s }_1$$
and for the boundary integral we have that
\begin{align*}
    \inner{(\kappa - \alpha_n)\check{u}^n_r}{\check{u}^n_r - \check{u}^n_s}_{L^2(\partial \mathscr{O};\R^2)} &\leq \inner{(\alpha_n - \kappa)\check{u}^n_r}{\check{u}^n_s}_{L^2(\partial \mathscr{O};\R^2)}\\ &\leq \left[\sup_{\partial \mathscr{O}}(\alpha_n - \kappa)\right]\norm{\check{u}^n_r}_{L^2(\partial \mathscr{O};\R^2)}\norm{\check{u}^n_s}_{L^2(\partial \mathscr{O};\R^2)}.
\end{align*} 
 In the following noise terms we again use (\ref{noise bound 1}), reaching that
\begin{align*}
    \sum_{i=1}^\infty \left(\inner{B_i\check{u}^n_r}{B_i^*\left(\check{u}^n_r - \check{u}^n_s\right)} +\norm{\mathcal{P}B_i\check{u}^n_r}^2 \right) &\leq \sum_{i=1}^\infty \left(\inner{B_i\check{u}^n_r}{B_i^*\check{u}^n_r} +\norm{B_i\check{u}^n_r}^2  + \inner{B_i\check{u}^n_r}{B_i^*\check{u}^n_s}\right)\\
    &\leq c\left(\norm{\check{u}^n_r}^2 + \norm{\check{u}^n_r}_1\norm{\check{u}^n_s}_1\right).
\end{align*}
Plugging all of these estimates back into (\ref{big expression}), as well as taking expectation which disappears the stochastic integral, we arrive at the inequality
\begin{align}\nonumber\mathbbm{E}\left(\norm{\hat{u}^{n}_{s + \delta} - \hat{u}^{n}_{s}}^2\right) &\leq c\mathbbm{E}\left(\int_s^{s+\delta}\norm{\check{u}^n_r}\norm{\check{u}^n_r}_1\norm{\check{u}^n_s}_{1} +  \norm{\check{u}^n_r}_1\norm{\check{u}^n_s}_1 +
\norm{\check{u}^n_r}^2 dr\right) \\ &+ c\mathbbm{E}\left(\int_s^{s+\delta}\left[\sup_{\partial \mathscr{O}}(\alpha_n - \kappa)\right]\norm{\check{u}^n_r}_{L^2(\partial \mathscr{O};\R^2)}\norm{\check{u}^n_s}_{L^2(\partial \mathscr{O};\R^2)} dr \right). \nonumber
\end{align}
We begin to use the control generated by the stopping time $\tau^R_n$, which prescribes a uniform in $\omega$ bound on $\sup_{r\in[0,T]}\norm{\check{u}^n_r}^2$. Putting this into the top line, allowing our variable constant $c$ to depend on the fixed $R$, integrating over $s \in [0,T-\delta]$ and applying Fubini-Tonelli's Theorem reveals that
\begin{align}\nonumber\mathbbm{E}\left(\int_0^{T-\delta}\norm{\hat{u}^{n}_{s + \delta} - \hat{u}^{n}_{s}}^2ds\right) &\leq c\mathbbm{E}\left(\int_0^{T-\delta}\int_s^{s+\delta}\norm{\check{u}^n_r}_1\norm{\check{u}^n_s}_{1} + 1 \,  drds\right) \\ &+ c\mathbbm{E}\left(\int_0^{T-\delta}\int_s^{s+\delta}\left[\sup_{\partial \mathscr{O}}(\alpha_n - \kappa)\right]\norm{\check{u}^n_r}_{L^2(\partial \mathscr{O};\R^2)}\norm{\check{u}^n_s}_{L^2(\partial \mathscr{O};\R^2)} dr ds\right). \nonumber
\end{align}
In view of what we want to prove, the goal here is to reduce the right hand side terms to ones uniformly bounded in $n$ and which approach zero as $\delta \rightarrow 0^+$. Isolating the constant integral, it is immediate that
$$ c\mathbbm{E}\left(\int_0^{T-\delta}\int_s^{s+\delta} 1 \,  drds\right) \leq c \delta $$
where $c$ is also dependent on $T$. In addition,
\begin{align*}
    \int_s^{s+\delta}\norm{\check{u}^n_r}_1\norm{\check{u}^n_s}_{1}  dr \leq \norm{\check{u}^n_s}_{1}\int_s^{s+\delta}\norm{\check{u}^n_r}_1  dr
    &\leq \norm{\check{u}^n_s}_{1}\left[\left(\int_s^{s+\delta}1 \,  dr\right) ^{\frac{1}{2}}\left(\int_s^{s+\delta}\norm{\check{u}^n_r}_1 ^2 dr\right)^{\frac{1}{2}} \right]\\ &\leq c\delta^{\frac{1}{2}}\norm{\check{u}^n_s}_{1}
\end{align*}
using, on this occasion, the uniform in $\omega$ control on $\int_0^T\norm{\check{u}^n_r}_1 ^2 dr$. The boundary integrals are treated in the same way,
\begin{align*}
    &\int_s^{s+\delta}\left[\sup_{\partial \mathscr{O}}(\alpha_n - \kappa)\right]\norm{\check{u}^n_r}_{L^2(\partial \mathscr{O};\R^2)}\norm{\check{u}^n_s}_{L^2(\partial \mathscr{O};\R^2)}  dr\\ & \qquad \qquad \qquad \qquad \leq \left[\sup_{\partial \mathscr{O}}(\alpha_n - \kappa)\right]^{\frac{1}{2}}\norm{\check{u}^n_s}_{L^2(\partial \mathscr{O};\R^2)}\int_s^{s+\delta}\left[\sup_{\partial \mathscr{O}}(\alpha_n - \kappa)\right]^{\frac{1}{2}}\norm{\check{u}^n_r}_{L^2(\partial \mathscr{O};\R^2)}  dr\\
    & \qquad  \qquad \qquad \qquad \leq c \delta^{\frac{1}{2}}\left[\sup_{\partial \mathscr{O}}(\alpha_n - \kappa)\right]^{\frac{1}{2}}\norm{\check{u}^n_s}_{L^2(\partial \mathscr{O};\R^2)}.
\end{align*}
Altogether, the problem is now reduced to showing that
\begin{align*}
   \lim_{\delta \rightarrow 0^+}\sup_{n \in \N} \delta^{\frac{1}{2}}\mathbbm{E}\left(\int_0^{T-\delta}\norm{\check{u}^n_s}_{1} +   \left[\sup_{\partial \mathscr{O}}(\alpha_n - \kappa)\right]^{\frac{1}{2}}\norm{\check{u}^n_s}_{L^2(\partial \mathscr{O};\R^2)}  ds \right) = 0
\end{align*}
or simply 
\begin{align*}
   \sup_{n \in \N}\mathbbm{E}\left(\int_0^{T}\norm{\check{u}^n_s}_{1} +   \left[\sup_{\partial \mathscr{O}}(\alpha_n - \kappa)\right]^{\frac{1}{2}}\norm{\check{u}^n_s}_{L^2(\partial \mathscr{O};\R^2)}  ds \right) < \infty
\end{align*}
which is clear given the uniform control on the integral of the square.
    
\end{proof}

\begin{proposition} \label{prop for tightness two}
    For any sequence of stopping times $(\gamma_n)$ with $\gamma_n: \Omega \rightarrow [0,T]$, and any  $R > 1$, $\phi \in W^{1,2}_{\sigma}$,
    \begin{equation} \label{required condition primed}
    \lim_{\delta \rightarrow 0^+}\sup_{n \in \N}\mathbbm{E}\left(\left\vert\left\langle \hat{u}^{n}_{(\gamma_n + \delta) \wedge T} - \hat{u}^{n}_{\gamma_n} , \phi \right\rangle \right\vert\right)= 0.
\end{equation} 
Moreover the sequence of the laws of $(u^n)$ is tight in the space of probability measures over\\ $C\left([0,T];W^{-\varepsilon,2}_{\sigma}\right)$.
\end{proposition}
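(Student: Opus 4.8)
The plan is to follow the architecture of Proposition \ref{prop for tightness}: the limit (\ref{required condition primed}) is an Aldous-type modulus-of-continuity estimate, and I would first explain why it yields the stated tightness before verifying it. Because $W^{\varepsilon,2}_{\sigma}$ embeds compactly into $L^2_{\sigma}$, the adjoint embedding $L^2_{\sigma} \hookrightarrow W^{-\varepsilon,2}_{\sigma}$ is compact, so the uniform estimate of Lemma \ref{lemma uniform} (via Chebyshev) confines the time-marginals of $(u^n)$ to a compact subset of $W^{-\varepsilon,2}_{\sigma}$ with probability close to one. Since the finite span of the eigenfunctions $(a_k)$ lies in $W^{1,2}_{\sigma}$ and is dense in $W^{\varepsilon,2}_{\sigma} = (W^{-\varepsilon,2}_{\sigma})^*$, equicontinuity of the $W^{-\varepsilon,2}_{\sigma}$-valued paths reduces to the scalar condition (\ref{required condition primed}) tested against a countable dense family of such $\phi$. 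Together with Lemma \ref{lemma uniform} this removes the localisation at $\tau^R_n$ exactly as in the preceding proposition, and the criterion of [\cite{rockner2022well}], [\cite{goodair2024weak}] then delivers tightness in $C([0,T];W^{-\varepsilon,2}_{\sigma})$, the continuity of the limits being inherited from the $L^2_{\sigma}$-continuity of each $u^n$.

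To verify (\ref{required condition primed}) I would fix $\phi \in W^{1,2}_{\sigma}$ and note that $W^{1,2}_{\sigma} \subset \bar{W}^{1,2}_{\sigma}$, so $\phi$ is admissible in the weak formulation (\ref{identityindefinitionofspatiallyweak}). Applying optional stopping to $\hat{u}^n$ collapses all integrands onto $\check{u}^n$, and the increment $\inner{\hat{u}^{n}_{(\gamma_n+\delta)\wedge T} - \hat{u}^{n}_{\gamma_n}}{\phi}$ becomes the sum of the nonlinear, viscous, boundary, It\^o--Stratonovich-corrector and stochastic contributions integrated over the random window $[\gamma_n,(\gamma_n+\delta)\wedge T]$. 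The decisive point is that the boundary contribution vanishes outright: by Remark \ref{new first labelled remark} a function $\phi \in W^{1,2}_{\sigma}$ is zero-trace, so $\inner{(\kappa - \alpha_n)\check{u}^n_r}{\phi}_{L^2(\partial \mathscr{O};\R^2)} = 0$. This is exactly the dividend of testing against zero-trace fields: the term that was delicate in Proposition \ref{prop for tightness}, carrying the diverging weight $\sup_{\partial \mathscr{O}}(\alpha_n - \kappa)$, is simply absent, which is what makes the $\alpha_n \to \infty$ regime tractable in this topology.

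The deterministic terms are then all $O(\delta^{1/2})$ uniformly in $n$ by the $\omega$-uniform bounds on $\sup_{r}\norm{\check{u}^n_r}^2$ and $\int_0^T \norm{\check{u}^n_r}_1^2\,dr$ enforced by $\tau^R_n$. Concretely, anti-symmetry (\ref{wloglhs}) and Ladyzhenskaya give $\abs{\inner{\mathcal{L}_{\check{u}^n_r}\check{u}^n_r}{\phi}} \leq c\norm{\check{u}^n_r}\norm{\check{u}^n_r}_1\norm{\phi}_1$, the viscous term obeys $\abs{\inner{\check{u}^n_r}{\phi}_1} \leq \norm{\check{u}^n_r}_1\norm{\phi}_1$, and the bounds on $B_i$ and $B_i^*$ with summability of $\norm{\xi_i}_{W^{2,\infty}}^2$ give $\sum_i \abs{\inner{B_i\check{u}^n_r}{B_i^*\phi}} \leq c\norm{\check{u}^n_r}_1\norm{\phi}_1$. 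In each case integrating over the $\delta$-window and applying a single Cauchy--Schwarz in time yields a bound $\leq C\delta^{1/2}$ with $C$ depending on $R$ and $\phi$ but not on $n$.

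The one genuinely technical point is the stochastic term, which lives over the random interval $[\gamma_n,(\gamma_n+\delta)\wedge T]$. I would set $N_t := \int_0^t \inner{B\check{u}^n_s}{\phi}d\mathcal{W}_s$, a continuous square-integrable martingale, and estimate $\mathbbm{E}\abs{N_{(\gamma_n+\delta)\wedge T} - N_{\gamma_n}} \leq (\mathbbm{E}[\langle N\rangle_{(\gamma_n+\delta)\wedge T} - \langle N\rangle_{\gamma_n}])^{1/2}$, the equality of the second moment with the quadratic-variation increment following from optional stopping applied to $N^2 - \langle N\rangle$ at the bounded stopping times $\gamma_n \leq (\gamma_n+\delta)\wedge T$. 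Since $\abs{\inner{B_i\check{u}^n_s}{\phi}} = \abs{\inner{\check{u}^n_s}{B_i^*\phi}} \leq c\norm{\check{u}^n_s}\norm{\xi_i}_{W^{1,\infty}}\norm{\phi}_1$, summability of $\norm{\xi_i}_{W^{2,\infty}}^2$ and the stopping-time bound on $\norm{\check{u}^n_s}^2$ give $\langle N\rangle_{(\gamma_n+\delta)\wedge T} - \langle N\rangle_{\gamma_n} \leq cR\norm{\phi}_1^2\delta$, hence a $C\delta^{1/2}$ estimate uniform in $n$. Collecting the five contributions establishes (\ref{required condition primed}); the only obstacle throughout is that the increment is taken over a random interval, which the optional-stopping identity for the quadratic variation dispatches cleanly.
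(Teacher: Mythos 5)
Your proposal is correct and follows essentially the same route as the paper: tightness is reduced to the increment estimate via the abstract criterion with $\mathcal{V} = W^{1,2}_{\sigma}$ dense in $W^{\varepsilon,2}_{\sigma}$, the boundary term vanishes because $\phi$ is zero-trace, and the deterministic terms are $O(\delta^{1/2})$ uniformly in $n$ from the $\omega$-uniform bounds enforced by $\tau^R_n$. The only cosmetic difference is that you bound the stochastic increment by optional stopping applied to $N^2 - \langle N\rangle$ where the paper invokes the Burkholder--Davis--Gundy inequality; both yield the same $C\delta^{\frac{1}{2}}$ estimate.
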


\begin{proof}
Once more we first appreciate that tightness will follow from (\ref{required condition primed}) by applying Lemma \ref{lemma for D tight}, taking $\mathcal{Y}:= W^{\varepsilon,2}_{\sigma}$, $\mathcal{H}:= L^2_{\sigma}$ and $\mathcal{V}:= W^{1,2}_{\sigma}$, due to the uniform estimates of Lemma \ref{lemma uniform} and allowing for localisation exactly as in Proposition \ref{prop for tightness}. Towards (\ref{required condition primed}) we consider the identity satisfied by the increment, yielding
\begin{align}\nonumber\left\vert\inner{\hat{u}^{n}_{(\gamma_n + \delta) \wedge T} - \hat{u}^{n}_{\gamma_n}}{\phi}\right\vert &\leq \int_{\gamma_n}^{(\gamma_n + \delta) \wedge T}\abs{\inner{\mathcal{L}_{\check{u}^n_s}\check{u}^n_s}{\phi}}ds + \nu\int_{\gamma_n}^{(\gamma_n + \delta) \wedge T}\abs{\inner{\check{u}^n_s}{\phi}}_1ds\\ \nonumber &+ \int_{\gamma_n}^{(\gamma_n + \delta) \wedge T} \sum_{i=1}^\infty \abs{\inner{B_i\check{u}^n_s}{B_i^*\phi}}ds + \abs{\sum_{i=1}^\infty\int_{\gamma_n}^{(\gamma_n + \delta) \wedge T}\inner{B_i\check{u}^n_s}{\phi}dW^i_s}. \nonumber
\end{align}
There is no boundary integral in the above due to the choice of $\phi \in W^{1,2}_{\sigma}$, which is imperative for our estimates. The integrands in the time integrals are all controlled exactly as they were in Proposition \ref{prop for tightness}, and absorbing norms of $\phi$ into the constant, this reduces to 
\begin{align}\nonumber\left\vert\inner{\hat{u}^{n}_{(\gamma_n + \delta) \wedge T} - \hat{u}^{n}_{\gamma_n}}{\phi}\right\vert \leq c\int_{\gamma_n}^{(\gamma_n + \delta) \wedge T} 1 + \norm{\check{u}^n_s}_1 ds + \abs{\sum_{i=1}^\infty\int_{\gamma_n}^{(\gamma_n + \delta) \wedge T}\inner{B_i\check{u}^n_s}{\phi}dW^i_s}. \nonumber
\end{align}
Now we take expectation of the above, and applying the Burkholder-Davis-Gundy Inequality to the stochastic integral, we see that
\begin{align*}
   \mathbbm{E}\left( \abs{\sum_{i=1}^\infty\int_{\gamma_n}^{(\gamma_n + \delta) \wedge T}\inner{B_i\check{u}^n_s}{\phi}dW^i_r}\right) \leq c\mathbbm{E}\left( \int_{\gamma_n}^{(\gamma_n + \delta) \wedge T}\sum_{i=1}^\infty\inner{\check{u}^n_s}{B_i^*\phi}^2ds\right)^{\frac{1}{2}} \leq c\delta^{\frac{1}{2}}
\end{align*}
having immediately used that $\norm{\check{u}^n_s} \leq c$ due to the stopping time $\tau^R_n$. Therefore
\begin{align}\nonumber\mathbbm{E}\left(\left\vert\inner{\hat{u}^{n}_{(\gamma_n + \delta) \wedge T} - \hat{u}^{n}_{\gamma_n}}{\phi}\right\vert \right)\leq c\mathbbm{E}\left( \delta^{\frac{1}{2}} + \delta + \int_{\gamma_n}^{(\gamma_n + \delta) \wedge T} \norm{\check{u}^n_s}_1 ds\right).
\end{align}
Exactly as we saw in Proposition \ref{prop for tightness},
$$\int_{\gamma_n}^{(\gamma_n+\delta) \wedge T}\norm{\check{u}^n_s}_1  ds
    \leq \left[\left(\int_{\gamma_n}^{(\gamma_n+\delta) \wedge T}1 \,  ds\right) ^{\frac{1}{2}}\left(\int_{\gamma_n}^{(\gamma_n+\delta) \wedge T}\norm{\check{u}^n_s}_1 ^2 ds\right)^{\frac{1}{2}} \right] \leq c\delta^{\frac{1}{2}}$$
    from which taking the supremum over $n \in \N$ and limit $\delta \rightarrow 0^+$ gives the result.

\end{proof}

\subsection{Identification of the Limit} \label{subs identification of the limit}

With tightness achieved, it is now a standard procedure to apply the Prohorov and Skorohod Representation Theorems to deduce the existence of a new probability space on which a sequence of processes with the same distribution as a subsequence of $(u^{n})$ have some almost sure convergence to a limiting process. For notational simplicity we take this subsequence and keep it simply indexed by $n$. We state the precise result in the below theorem, following e.g. [\cite{nguyen2021nonlinear}] Proposition 4.9 and Theorem 4.10. The additional regularity and moment convergence is obtained exactly as in [\cite{goodair2024weak}] Subsection 2.5.

\begin{proposition} \label{theorem for new prob space}
    There exists a filtered probability space $\left(\tilde{\Omega},\tilde{\mathcal{F}},(\tilde{\mathcal{F}}_t), \tilde{\mathbbm{P}}\right)$, a Cylindrical Brownian Motion $\tilde{\mathcal{W}}$ over $\mathfrak{U}$ with respect to $\left(\tilde{\Omega},\tilde{\mathcal{F}},(\tilde{\mathcal{F}}_t), \tilde{\mathbbm{P}}\right)$, a random variable $ \tilde{u}_0: \tilde{\Omega} \rightarrow L^2_{\sigma}$, and progressively measurable processes $(\tilde{u}^n), \tilde{u} :\tilde{\Omega} \times [0,T] \rightarrow \bar{W}^{1,2}_{\sigma}$ such that:
   \begin{enumerate}        
        \item $\tilde{u}_0$ has the same law as $u_0$;
        \item \label{new item 3} $\tilde{u}^n$ is the unique weak solution of (\ref{NSalphan}) on $\left(\tilde{\Omega},\tilde{\mathcal{F}},(\tilde{\mathcal{F}}_t), \tilde{\mathbbm{P}}, \tilde{\mathcal{W}}\right)$ with initial condition $\tilde{u}_0$;
    \item \label{new item 4}$\tilde{u}^n \rightarrow \tilde{u}$ in $C\left([0,T];W^{-\varepsilon,2}_{\sigma} \right) \cap L^2\left([0,T];L^2_{\sigma} \right)$ $\tilde{\mathbbm{P}}-a.s.$, and in\\ $L^2\left[ \tilde{\Omega};C\left([0,T];W^{-\varepsilon,2}_{\sigma}\right) \cap L^2\left([0,T];L^2_{\sigma} \right)  \right]$; 
    \item \label{new item 5} $\tilde{u} \in L^\infty\left([0,T];L^2_{\sigma} \right) \cap L^2\left([0,T];\bar{W}^{1,2}_{\sigma}\right)$ $\tilde{\mathbbm{P}}-a.s.$ and in\\ $L^2\left[\tilde{\Omega};L^\infty\left([0,T];L^2_{\sigma} \right) \cap L^2\left([0,T];\bar{W}^{1,2}_{\sigma}\right)\right].$
    \end{enumerate}
    \end{proposition}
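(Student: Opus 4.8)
The plan is to obtain the construction from the tightness already established, together with the Prohorov–Skorohod machinery and a martingale characterisation to transfer the solution property to the new space. First I would assemble tightness of the joint laws. Propositions \ref{prop for tightness} and \ref{prop for tightness two} give tightness of the laws of $(u^n)$ in $L^2\left([0,T];L^2_{\sigma}\right)$ and in $C\left([0,T];W^{-\varepsilon,2}_{\sigma}\right)$ respectively, hence tightness in the intersection $C\left([0,T];W^{-\varepsilon,2}_{\sigma}\right) \cap L^2\left([0,T];L^2_{\sigma}\right)$ under the combined topology, since a set relatively compact in both factors is relatively compact in the intersection. To be able to reconstruct the equation on the new space I would carry the driving noise, and also the common initial datum, along with the solution: I consider the joint laws of the triples $(u_0, u^n, \mathcal{W})$ on the product of $L^2_{\sigma}$, the above path space, and $C\left([0,T];\mathfrak{U}'\right)$. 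As $u_0$ and $\mathcal{W}$ are fixed their marginals are single, hence tight, measures, so the joint laws remain tight.

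Next I would invoke Prohorov's Theorem to extract a weakly convergent subsequence of these joint laws, relabelled by $n$, and then the Skorohod Representation Theorem to realise this weak convergence as almost sure convergence on a new filtered probability space $\left(\tilde{\Omega},\tilde{\mathcal{F}},(\tilde{\mathcal{F}}_t), \tilde{\mathbbm{P}}\right)$. This produces a common initial datum $\tilde{u}_0$ with the law of $u_0$ (item 1), a driving process $\tilde{\mathcal{W}}$, copies $(\tilde{u}^n)$ and a limit $\tilde{u}$, such that $(\tilde{u}_0, \tilde{u}^n, \tilde{\mathcal{W}})$ shares the law of $(u_0, u^n, \mathcal{W})$ and $\tilde{u}^n \to \tilde{u}$ almost surely in $C\left([0,T];W^{-\varepsilon,2}_{\sigma}\right) \cap L^2\left([0,T];L^2_{\sigma}\right)$. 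Because the noise marginal is the fixed Wiener measure, $\tilde{\mathcal{W}}$ is again a cylindrical Brownian motion over $\mathfrak{U}$. I take $(\tilde{\mathcal{F}}_t)$ to be the usual augmentation of the filtration generated by $\tilde{u}$, the $\tilde{u}^n$, and $\tilde{\mathcal{W}}$.

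The crux, and the step I expect to be the main obstacle, is item \ref{new item 3}: verifying that each $\tilde{u}^n$ is genuinely a weak solution of (\ref{NSalphan}) driven by $\tilde{\mathcal{W}}$ on the new space. The difficulty is that the stochastic integral is not a continuous functional of the sample paths, so equality of laws does not transfer the solution property automatically. I would resolve this by a martingale characterisation: first confirm that $\tilde{\mathcal{W}}$ is an $(\tilde{\mathcal{F}}_t)$-cylindrical Brownian motion, by checking it is a continuous martingale with the correct covariance structure transferred from $\mathcal{W}$; then, for each test function $\psi \in \bar{W}^{1,2}_{\sigma}$, show that the drift-corrected increment associated with $\tilde{u}^n$ and $\psi$ is an $(\tilde{\mathcal{F}}_t)$-martingale whose quadratic variation coincides with that generated by the noise coefficient, and identify the martingale part with $\int_0^{\cdot}\inner{B\tilde{u}^n_s}{\psi}d\tilde{\mathcal{W}}_s$, thereby recovering (\ref{identityindefinitionofspatiallyweak}) on the new space. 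This is precisely the scheme of [\cite{nguyen2021nonlinear}] Proposition 4.9 and Theorem 4.10, which I would follow; the bounds (\ref{boundsonB_i}), (\ref{noise bound 1}) and (\ref{noise bound 2}) ensure that all the relevant drift, corrector and martingale terms are well defined and that the passages to the limit are justified.

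Finally, for items \ref{new item 4} and \ref{new item 5}, the almost sure convergence $\tilde{u}^n \to \tilde{u}$ is built into the Skorohod construction. Since $\tilde{u}^n$ shares the law of $u^n$, the uniform energy estimate of Lemma \ref{lemma uniform} transfers to give $\sup_n \tilde{\mathbbm{E}}\left(\sup_{t \in [0,T]}\norm{\tilde{u}^n_t}^2 + \int_0^T\norm{\tilde{u}^n_s}_1^2\, ds\right) < \infty$. This bounds $(\tilde{u}^n)$ in the reflexive space $L^2\left[\tilde{\Omega};L^2\left([0,T];\bar{W}^{1,2}_{\sigma}\right)\right]$ and, by a weak-$*$ argument, in $L^2\left[\tilde{\Omega};L^\infty\left([0,T];L^2_{\sigma}\right)\right]$; extracting weak limits and identifying them with $\tilde{u}$ via the almost sure convergence yields the regularity of item \ref{new item 5}. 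The upgrade of the almost sure convergence to convergence in $L^2\left[\tilde{\Omega};C\left([0,T];W^{-\varepsilon,2}_{\sigma}\right)\cap L^2\left([0,T];L^2_{\sigma}\right)\right]$ then follows from a Vitali (uniform integrability) argument fed by the same uniform bound, carried out exactly as in [\cite{goodair2024weak}] Subsection 2.5.
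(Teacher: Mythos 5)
Your proposal is correct and follows essentially the same route as the paper, which likewise passes from the tightness of Propositions \ref{prop for tightness} and \ref{prop for tightness two} through the Prohorov and Skorohod Representation Theorems, identifies the solution property on the new space via the martingale scheme of [\cite{nguyen2021nonlinear}] Proposition 4.9 and Theorem 4.10, and obtains the additional regularity and moment convergence as in [\cite{goodair2024weak}] Subsection 2.5. The paper states this only in outline, so your write-up simply fills in the same steps in more detail.
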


We wish to show that $\tilde{u}$ is the unique weak solution of (\ref{NSinfty}) on $\left(\tilde{\Omega},\tilde{\mathcal{F}},(\tilde{\mathcal{F}}_t), \tilde{\mathbbm{P}}, \tilde{\mathcal{W}}\right)$. This is the motivation behind the following lemmas.

\begin{lemma} \label{lemma for zero boundary in limit}
    $\tilde{u}$ belongs $\tilde{\mathbbm{P}} \times \lambda-a.s.$ to $W^{1,2}_{\sigma}$. In particular, $\tilde{u}$ is progressively measurable in $W^{1,2}_{\sigma}$ and for $\mathbbm{P}-a.e.$ $\omega$, $\tilde{u}_{\cdot}(\omega) \in L^2\left([0,T];W^{1,2}_{\sigma}\right)$. 
\end{lemma}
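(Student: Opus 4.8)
The plan is to upgrade the membership $\tilde{u} \in \bar{W}^{1,2}_{\sigma}$, already supplied by Proposition \ref{theorem for new prob space} item \ref{new item 5}, to membership in $W^{1,2}_{\sigma}$. By Remark \ref{new first labelled remark} these two spaces differ only in that $W^{1,2}_{\sigma}$ additionally requires the zero-trace condition, so it suffices to prove that $\tilde{u}$ has vanishing trace for $\tilde{\mathbbm{P}} \times \lambda$-a.e. $(\omega,s)$. The mechanism is the trace inequality (\ref{inequality from Lions}) combined with the boundary decay (\ref{zero limit}) of Proposition \ref{prop boundary to zero}: the approximations $\tilde{u}^n$ vanish at the boundary in the limit, and (\ref{inequality from Lions}) will let me transfer this to $\tilde{u}$ despite the convergence being only in $L^2_{\sigma}$.

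First I would note that every a priori estimate established for weak solutions of (\ref{NSalphan}) applies verbatim to $\tilde{u}^n$, since by Proposition \ref{theorem for new prob space} item \ref{new item 3} each $\tilde{u}^n$ is itself the unique weak solution on the new stochastic basis and the constants in Lemma \ref{lemma uniform} and Proposition \ref{prop boundary to zero} depend only on the data. In particular $\tilde{\mathbbm{E}}\int_0^T \norm{\tilde{u}^n_s}_1^2\,ds$ is bounded uniformly in $n$, and $\lim_{n\to\infty}\tilde{\mathbbm{E}}\big(\int_0^T \norm{\tilde{u}^n_s}^2_{L^2(\partial \mathscr{O};\R^2)}\,ds\big)=0$. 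I would then apply (\ref{inequality from Lions}) pointwise in $(\omega,s)$ to the difference $\tilde{u}^n_s - \tilde{u}_s$, which lies in $W^{1,2}$ for a.e. $(\omega,s)$, integrate over $[0,T]$, take expectation, and invoke Cauchy--Schwarz with respect to $\tilde{\mathbbm{P}}\times\lambda$ to obtain
$$\tilde{\mathbbm{E}}\int_0^T \norm{\tilde{u}^n_s - \tilde{u}_s}^2_{L^2(\partial \mathscr{O};\R^2)}\,ds \leq c\left(\tilde{\mathbbm{E}}\int_0^T \norm{\tilde{u}^n_s - \tilde{u}_s}^2\,ds\right)^{\frac12}\left(\tilde{\mathbbm{E}}\int_0^T \norm{\tilde{u}^n_s - \tilde{u}_s}^2_{W^{1,2}}\,ds\right)^{\frac12}.$$
The first factor tends to zero by the $L^2\big[\tilde{\Omega};L^2([0,T];L^2_{\sigma})\big]$ convergence in item \ref{new item 4}, while the second is bounded uniformly in $n$ by the triangle inequality together with the uniform estimate of Lemma \ref{lemma uniform} (for $\tilde{u}^n$) and the membership in item \ref{new item 5} (for $\tilde{u}$). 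Hence the traces of $\tilde{u}^n$ converge to the trace of $\tilde{u}$ in $L^2(\tilde{\Omega}\times[0,T];L^2(\partial \mathscr{O};\R^2))$.

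Combining this convergence with the boundary decay and one further triangle inequality forces $\tilde{\mathbbm{E}}\int_0^T \norm{\tilde{u}_s}^2_{L^2(\partial \mathscr{O};\R^2)}\,ds = 0$, so $\tilde{u}$ is zero-trace $\tilde{\mathbbm{P}}\times\lambda$-a.e.; being also divergence-free, Remark \ref{new first labelled remark} identifies it as $W^{1,2}_{\sigma}$-valued a.e. Progressive measurability in $W^{1,2}_{\sigma}$ then follows since $W^{1,2}_{\sigma}$ is a closed subspace of $\bar{W}^{1,2}_{\sigma}$ in which $\tilde{u}$ is already progressively measurable, and the statement $\tilde{u}_{\cdot}(\omega)\in L^2([0,T];W^{1,2}_{\sigma})$ for a.e. $\omega$ follows by Fubini. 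I do not anticipate a serious obstacle here; the only points demanding genuine care are the justification that the estimates of Lemma \ref{lemma uniform} and Proposition \ref{prop boundary to zero} truly hold for $\tilde{u}^n$ (secured by item \ref{new item 3}) and the verification that the merely weak $W^{1,2}$ control on the limit in item \ref{new item 5} is enough to keep the second Cauchy--Schwarz factor uniformly bounded.
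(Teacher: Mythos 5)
Your proposal is correct and follows essentially the same route as the paper's proof: the trace inequality (\ref{inequality from Lions}) plus Cauchy--Schwarz shows the traces of $\tilde{u}^n$ converge to that of $\tilde{u}$ in $L^2(\tilde{\Omega}\times[0,T];L^2(\partial\mathscr{O};\R^2))$, which combined with (\ref{zero limit}) forces the trace of $\tilde{u}$ to vanish, whence $\tilde{u}\in W^{1,2}_{\sigma}$ a.e. by Remark \ref{new first labelled remark}. Your explicit remark that the a priori estimates transfer to $\tilde{u}^n$ via item \ref{new item 3} is a point the paper leaves implicit, but the argument is the same.
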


\begin{proof}
    The key element of this proof is Proposition \ref{prop boundary to zero}, namely (\ref{zero limit}). Indeed (\ref{zero limit}) shows us that $(\tilde{u}^n)$ converges to zero in $L^2\left[\tilde{\Omega} \times [0,T]; L^2\left(\partial \mathscr{O};\R^2\right)\right]$. To prove the lemma we need to reconcile this convergence with that to $\tilde{u}$. By (\ref{inequality from Lions}) we have that
    \begin{align*}
        \tilde{\mathbbm{E}}\int_0^T\norm{\tilde{u}^n_s - \tilde{u}_s}_{L^2\left(\partial \mathscr{O};\R^2\right)}^2 ds  &\leq \tilde{\mathbbm{E}}\int_0^T\norm{\tilde{u}^n_s - \tilde{u}_s}\norm{\tilde{u}^n_s - \tilde{u}_s}_{1} ds\\
        &\leq \left(\tilde{\mathbbm{E}}\int_0^T\norm{\tilde{u}^n_s - \tilde{u}_s}^2 ds\right)^{\frac{1}{2}}\left(\tilde{\mathbbm{E}}\int_0^T\norm{\tilde{u}^n_s - \tilde{u}_s}_{1}^2 ds\right)^{\frac{1}{2}}\\
        &\leq 2\left(\tilde{\mathbbm{E}}\int_0^T\norm{\tilde{u}^n_s - \tilde{u}_s}^2 ds\right)^{\frac{1}{2}}\left(\tilde{\mathbbm{E}}\int_0^T\norm{\tilde{u}^n_s}_1^2 + \norm{\tilde{u}_s}_{1}^2 ds\right)^{\frac{1}{2}}
    \end{align*}
which approaches zero as $n \rightarrow \infty$, due to the convergence in item \ref{new item 4} and the uniform bounds from Lemma \ref{lemma uniform}. So $(\tilde{u}^n)$ is convergent to both zero and $\tilde{u}$ in $L^2\left[\tilde{\Omega} \times [0,T]; L^2\left(\partial \mathscr{O};\R^2\right)\right]$, so by uniqueness of limits we must have that $\tilde{u}$ is equal to zero in this topology. Therefore $\tilde{u}$ is $\tilde{\mathbbm{P}} \times \lambda-a.s.$ equal to zero in $L^2\left(\partial \mathscr{O};\R^2\right)$ thus has null trace, and as it is already established to belong to $\bar{W}^{1,2}_{\sigma}$ then it is in $W^{1,2}_{\sigma}$. The remaining assertions of the lemma follow from the corresponding properties in $\bar{W}^{1,2}_{\sigma}$ and the fact that the topology on $W^{1,2}_{\sigma}$ is the restriction of the topology on $\bar{W}^{1,2}_{\sigma}$.\footnote{Recall that progressive measurability in the solution theory is of a $\mathbbm{P} \times \lambda-a.s.$ equal version of the process, see [\cite{goodair2024stochastic}] Remark 3.1. }

\end{proof}

\begin{lemma} \label{lemma for limit identity}
    $\tilde{u}$ satisfies the identity
 \begin{align} \nonumber
     \inner{\tilde{u}_t}{\phi} = \inner{\tilde{u}_0}{\phi} - \int_0^{t}\inner{\mathcal{L}_{\tilde{u}_s}\tilde{u}_s}{\phi}ds &- \nu\int_0^{t} \inner{\tilde{u}_s}{\phi}_1 ds\\ &+ \frac{1}{2}\int_0^{t}\sum_{i=1}^\infty \inner{B_i\tilde{u}_s}{B_i^*\phi} ds - \int_0^{t} \inner{B\tilde{u}_s}{\phi} d\mathcal{W}_s \nonumber
\end{align}
    for every $\phi \in W^{1,2}_{\sigma}$, $\tilde{\mathbbm{P}}-a.s.$ in $\R$ for all $t\in[0,T]$. Moreover for $\tilde{\mathbbm{P}}-a.e.$ $\omega$, $\tilde{u}_{\cdot}(\omega) \in C\left([0,T];L^2_{\sigma} \right)$.
\end{lemma}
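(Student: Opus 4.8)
The plan is to pass to the limit $n\to\infty$ in the weak formulation (\ref{identityindefinitionofspatiallyweak}) satisfied by each $\tilde{u}^n$ (item \ref{new item 3} of Proposition \ref{theorem for new prob space}). The key structural observation is that any test function $\phi\in W^{1,2}_{\sigma}$ is zero-trace by Remark \ref{new first labelled remark}, so the boundary integral $\nu\int_0^t\inner{(\kappa-\alpha_n)\tilde{u}^n_s}{\phi}_{L^2(\partial\mathscr{O};\R^2)}ds$ appearing in (\ref{identityindefinitionofspatiallyweak}) vanishes \emph{identically}. Consequently each $\tilde{u}^n$ satisfies precisely the claimed identity with $\tilde{u}^n$ in place of $\tilde{u}$, and the task reduces to proving that each of the five terms converges to its counterpart for $\tilde{u}$. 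I would fix $t\in[0,T]$ and a test function $\phi$, show that the difference of each term tends to zero either in $L^1(\tilde{\Omega})$ or weakly in $L^2(\tilde{\Omega})$, and thereby conclude that the limiting identity holds $\tilde{\mathbbm{P}}$-a.s.; continuity in $t$ and a density argument in $\phi$ then upgrade this to the full statement.

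For the individual terms I would argue as follows. The pointwise term $\inner{\tilde{u}^n_t}{\phi}$ converges to $\inner{\tilde{u}_t}{\phi}$ in $L^2(\tilde{\Omega})$ using the $C([0,T];W^{-\varepsilon,2}_{\sigma})$ convergence of item \ref{new item 4} together with $\phi\in W^{1,2}_\sigma\subset W^{\varepsilon,2}_{\sigma}$. The stochastic integral is handled by rewriting $\inner{B_i\tilde{u}^n_s}{\phi}=\inner{\tilde{u}^n_s}{B_i^*\phi}$ and applying the It\^o isometry: the integrand difference is controlled by $\norm{\tilde{u}^n_s-\tilde{u}_s}^2\sum_i\norm{B_i^*\phi}^2$, which vanishes in $L^1(\tilde{\Omega}\times[0,T])$ by the strong $L^2$-convergence of item \ref{new item 4}. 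For the nonlinear term I would take $\phi$ smooth and use the anti-symmetry (\ref{wloglhs}) to write $\inner{\mathcal{L}_{\tilde{u}^n_s}\tilde{u}^n_s}{\phi}=-\inner{\tilde{u}^n_s}{\mathcal{L}_{\tilde{u}^n_s}\phi}$, so the difference against $\tilde{u}$ is bounded by $\norm{\nabla\phi}_{L^\infty}$ times a quadratic product whose $L^1(\tilde{\Omega}\times[0,T])$-norm is estimated by Cauchy--Schwarz, using the second-moment bounds of Lemma \ref{lemma uniform} and the strong $L^2$-convergence.

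The two remaining terms, $\nu\int_0^t\inner{\tilde{u}^n_s}{\phi}_1 ds$ and the corrector $\frac{1}{2}\int_0^t\sum_{i=1}^\infty\inner{B_i\tilde{u}^n_s}{B_i^*\phi}ds$, are linear but each involves a spatial gradient of $\tilde{u}^n$, for which item \ref{new item 4} provides only strong $L^2$ information. Here I would invoke the uniform estimate of Lemma \ref{lemma uniform} to extract weak convergence $\tilde{u}^n\rightharpoonup\tilde{u}$ in $L^2\left(\tilde{\Omega}\times[0,T];\bar{W}^{1,2}_{\sigma}\right)$, the weak limit being forced to equal $\tilde{u}$ by the strong $L^2$-convergence. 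Since $v\mapsto\inner{v}{\phi}_1$ and $v\mapsto\sum_i\inner{B_iv}{B_i^*\phi}$ are bounded linear functionals on $\bar{W}^{1,2}_{\sigma}$ (the latter by (\ref{boundsonB_i}) and the summability of $\norm{\xi_i}_{W^{2,\infty}}^2$), both terms converge weakly in $L^2(\tilde{\Omega})$ for each fixed $t$. As all other terms converge strongly and the $\tilde{u}^n$-identity is identically zero, the weak and strong limits must agree, which yields the limiting identity $\tilde{\mathbbm{P}}$-a.s. A cleaner alternative for the Stokes term, more in keeping with the boundary analysis, is to take $\phi\in W^{2,2}\cap W^{1,2}_{\sigma}$ and integrate by parts: the interior part converges by strong $L^2$-convergence, the boundary contribution vanishes in the limit directly from (\ref{zero limit}), and for the limit $\tilde{u}$ no boundary term appears since it is zero-trace by Lemma \ref{lemma for zero boundary in limit}.

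Finally, the identity holding a.s. for each fixed $t$ and each $\phi$ in a countable dense subset of $W^{1,2}_{\sigma}$ extends to all $t$ by the pathwise continuity of both sides (the left-hand side is continuous since $\tilde{u}\in C([0,T];W^{-\varepsilon,2}_{\sigma})$), and to all $\phi\in W^{1,2}_{\sigma}$ by density, using the continuity of each term in $\phi$ with respect to the $W^{1,2}$-norm that follows from the bound (\ref{a bound in align}) and the regularity in item \ref{new item 5}. The path continuity in $L^2_{\sigma}$ is then obtained by noting that $\tilde{u}\in L^2([0,T];W^{1,2}_{\sigma})$ by Lemma \ref{lemma for zero boundary in limit} and that the established identity is precisely (\ref{NSinfty}) read in $W^{-1,2}_{\sigma}$, with drift in $L^2([0,T];W^{-1,2}_{\sigma})$ and diffusion in $L^2([0,T];\mathscr{L}^2(\mathfrak{U};L^2_{\sigma}))$, so that the energy identity (Proposition \ref{Ito formula}) applies and furnishes a continuous $L^2_{\sigma}$-modification. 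The main obstacle is the passage to the limit in the terms containing gradients of $\tilde{u}^n$ — the nonlinear term and the two linear gradient terms — since the Skorohod convergence is only strong in $L^2$ while the a priori bounds hold only in the mean; reconciling the weak convergence of the gradient terms with the strong convergence of the remaining terms is the delicate point.
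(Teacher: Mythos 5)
Your proposal is correct and follows essentially the same route as the paper: pass to the limit term by term in the weak formulation satisfied by $\tilde{u}^n$, observe that the boundary integral vanishes identically because $\phi\in W^{1,2}_{\sigma}$ is zero-trace, and obtain the $C\left([0,T];L^2_{\sigma}\right)$ regularity from the energy identity of Proposition \ref{Ito formula} with $\mathcal{H}_1=W^{1,2}_{\sigma}$, $\mathcal{H}_2=L^2_{\sigma}$, $\mathcal{H}_3=W^{-1,2}_{\sigma}$. The only difference is that the paper delegates the term-by-term convergence to an external reference ([\cite{goodair2024weak}] Proposition 2.18, via a.s.\ convergence of a further subsequence) whereas you supply those details directly; your treatment of each term is sound.
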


\begin{proof}
We show that $\tilde{u}$ satisfies this identity by showing the convergence $\tilde{\mathbbm{P}}-a.s.$ of a further subsequence, term by term, of the identity satisfied by $(\tilde{u}^n)$. This argument was given for an abstract SPDE in [\cite{goodair2024weak}] Proposition 2.18 which was explicitly shown to admit the Navier-Stokes equations with transport-stretching noise as a special case, containing the details for the convergence in each term. We only draw particular attention to the boundary integral in the identity satisfied by $\tilde{u}^n$, which is immediately zero as we are testing with $\phi \in W^{1,2}_{\sigma}$ and not a general $\psi \in \bar{W}^{1,2}_{\sigma}$. With the identity settled, the continuity follows as an immediate application of Proposition \ref{Ito formula} for $\mathcal{H}_1 = W^{1,2}_{\sigma}$, $\mathcal{H}_2:= L^2_{\sigma}$, $\mathcal{H}_3:= W^{-1,2}_{\sigma}$.

\end{proof}

As a consequence of Lemmas \ref{lemma for zero boundary in limit} and \ref{lemma for limit identity}, $\tilde{u}$ is the unique weak solution of (\ref{NSinfty}) on\\ $\left(\tilde{\Omega},\tilde{\mathcal{F}},(\tilde{\mathcal{F}}_t), \tilde{\mathbbm{P}}, \tilde{\mathcal{W}}\right)$ for the initial condition $\tilde{u}_0$.

\subsection{Final Steps of Theorem \ref{main theorem}} \label{subs final steps}

We will prove Theorem \ref{main theorem} by combining the analysis given in Subsection \ref{subs identification of the limit} with Lemma \ref{gyongykrylov} in the appendix. We wish to apply Lemma \ref{gyongykrylov} for the sequence $(\sy^n) := (u^n)$ and $\mathcal{Y}:= C\left([0,T];W^{-\varepsilon,2}_{\sigma} \right) \cap L^2\left([0,T];L^2_{\sigma} \right)$. To this end we consider any two subsequences $(u^l)$, $(u^m)$ and the product $(u^l,u^m)$. As the whole sequence $(u^n)$ is tight in $\mathcal{Y}$ then so is every subsequence, and as the product of tight sequences is tight then $(u^l,u^m)$ is tight in $\mathcal{Y} \times \mathcal{Y}$. Now we can apply the Prohorov and Skorohod Representation Theorems for this product, yielding a $\tilde{\mathbbm{P}}-a.s.$ convergent subsequence $(\tilde{u}^{l_k},\tilde{u}^{m_k})$ in $\mathcal{Y} \times \mathcal{Y}$ with all of the properties of Proposition \ref{theorem for new prob space} simply for the product. $\tilde{\mathbbm{P}}-a.e.$ convergence of the product $(\tilde{u}^{l_k},\tilde{u}^{m_k})$ implies the convergence of each individual subsequence, and exactly as was shown in Subsection \ref{subs identification of the limit} each limit must be the unique weak solution of (\ref{NSinfty}) on this probability space, $\tilde{u}$. Therefore $(\tilde{u}^{l_k},\tilde{u}^{m_k})$ converges $\tilde{\mathbbm{P}}-a.s.$ to $(\tilde{u},\tilde{u})$ in $\mathcal{Y} \times \mathcal{Y}$, hence in law as well; by design $(\tilde{u}^{l_k},\tilde{u}^{m_k})$ has the same law as $(u^{l_k},u^{m_k})$, and as $(\tilde{u},\tilde{u})$ is supported only on the diagonal then the hypothesis of Lemma \ref{gyongykrylov} is verified and we can conclude convergence in probability of the whole sequence $(u^n)$ in $C\left([0,T];W^{-\varepsilon,2}_{\sigma} \right) \cap L^2\left([0,T];L^2_{\sigma} \right)$. To prove Theorem \ref{main theorem} we now only need to show that this limit in probability is indeed $u$, which is true as convergence in probability implies the convergence of a subsequence $\mathbbm{P}-a.s.$ to the same limit, which has to be $u$ as already discussed. Therefore, Theorem \ref{main theorem} is proven.




\appendix
\section{Appendix} \label{appendix}

We collect useful results from the literature that have been used throughout the paper.

\begin{proposition} \label{Ito formula}
Let $\mathcal{H}_1 \subset \mathcal{H}_2 \subset \mathcal{H}_3$ be a triplet of embedded Hilbert Spaces where $\mathcal{H}_1$ is dense in $\mathcal{H}_2$, with the property that there exists a continuous nondegenerate bilinear form $\inner{\cdot}{\cdot}_{\mathcal{H}_3 \times \mathcal{H}_1}: \mathcal{H}_3 \times \mathcal{H}_1 \rightarrow \R$ such that for $\phi \in \mathcal{H}_2$ and $\psi \in \mathcal{H}_1$, $$\inner{\phi}{\psi}_{\mathcal{H}_3 \times \mathcal{H}_1} = \inner{\phi}{\psi}_{\mathcal{H}_2}.$$ Suppose that for some $T > 0$ and stopping time $\tau$,
\begin{enumerate}
        \item $\sy_0:\Omega \rightarrow \mathcal{H}_2$ is $\mathcal{F}_0-$measurable;
        \item $f:\Omega \times [0,T] \rightarrow \mathcal{H}_3$ is such that for $\mathbbm{P}-a.e.$ $\omega$, $f(\omega) \in L^2([0,T];\mathcal{H}_3)$;
        \item $B:\Omega \times [0,T] \rightarrow \mathscr{L}^2(\mathfrak{U};\mathcal{H}_2)$ is progressively measurable and such that for $\mathbbm{P}-a.e.$ $\omega$, $B(\omega) \in L^2\left([0,T];\mathscr{L}^2(\mathfrak{U};\mathcal{H}_2)\right)$;
        \item  \label{4*} $\sy:\Omega \times [0,T] \rightarrow \mathcal{H}_1$ is such that for $\mathbbm{P}-a.e.$ $\omega$, $\sy_{\cdot}(\omega)\mathbbm{1}_{\cdot \leq \tau(\omega)} \in L^2([0,T];\mathcal{H}_1)$ and $\sy_{\cdot}\mathbbm{1}_{\cdot \leq \tau}$ is progressively measurable in $\mathcal{H}_1$;
        \item \label{item 5 again*} The identity
        \begin{equation} \label{newest identity*}
            \sy_t = \sy_0 + \int_0^{t \wedge \tau}f_sds + \int_0^{t \wedge \tau}B_s d\mathcal{W}_s
        \end{equation}
        holds $\mathbbm{P}-a.s.$ in $\mathcal{H}_3$ for all $t \in [0,T]$.
    \end{enumerate}
The the equality 
  \begin{align} \label{ito big dog*}\norm{\sy_t}^2_{\mathcal{H}_2} = \norm{\sy_0}^2_{\mathcal{H}_2} + \int_0^{t\wedge \tau} \bigg( 2\inner{f_s}{\sy_s}_{\mathcal{H}_3 \times \mathcal{H}_1} + \norm{B_s}^2_{\mathscr{L}^2(\mathfrak{U};\mathcal{H}_2)}\bigg)ds + 2\int_0^{t \wedge \tau}\inner{B_s}{\sy_s}_{\mathcal{H}_2}d\mathcal{W}_s\end{align}
  holds for any $t \in [0,T]$, $\mathbbm{P}-a.s.$ in $\R$. Moreover for $\mathbbm{P}-a.e.$ $\omega$, $\sy_{\cdot}(\omega) \in C([0,T];\mathcal{H}_2)$. 
\end{proposition}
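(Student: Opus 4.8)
The plan is to establish \eqref{ito big dog*} by the classical route for It\^o formulae in a Gelfand triple (cf.\ Krylov--Rozovskii, Pr\'ev\^ot--R\"ockner), adapted to the present three-space setting in which the drift $f$ lives in the larger space $\mathcal{H}_3$ while the solution $\sy$ lives in $\mathcal{H}_1$, the two being coupled through the compatible pairing $\inner{\cdot}{\cdot}_{\mathcal{H}_3 \times \mathcal{H}_1}$. \textbf{Localisation.} First I would reduce to the case of finite moments by introducing the stopping times $\tau_R := \tau \wedge \inf\{t : \int_0^t \norm{\sy_s}_{\mathcal{H}_1}^2 + \norm{f_s}_{\mathcal{H}_3}^2 + \norm{B_s}_{\mathscr{L}^2(\mathfrak{U};\mathcal{H}_2)}^2 \, ds \geq R\}$, so that on $[0,\tau_R]$ every quantity is integrable. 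Since $\tau_R \uparrow \tau$ $\mathbbm{P}$-a.s.\ and the asserted identity is required only $\mathbbm{P}$-a.s.\ for each $t$, proving it with $\tau$ replaced by $\tau_R$ and sending $R \to \infty$ recovers the full statement; the equation \eqref{newest identity*} moreover freezes $\sy_t = \sy_\tau$ for $t \geq \tau$, so it suffices to work on $[0,\tau]$.

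\textbf{Finite-dimensional reduction.} Using density of $\mathcal{H}_1$ in $\mathcal{H}_2$, I would fix an orthonormal basis $(e_k)$ of $\mathcal{H}_2$ with each $e_k \in \mathcal{H}_1$, and write $\Pi_n$ for the $\mathcal{H}_2$-orthogonal projection onto $\mathrm{span}(e_1,\dots,e_n)$. Pairing \eqref{newest identity*} with $e_k$ through $\inner{\cdot}{e_k}_{\mathcal{H}_3 \times \mathcal{H}_1}$ is legitimate because $e_k \in \mathcal{H}_1$ and the pairing restricts to the $\mathcal{H}_2$ inner product; this shows that each scalar process $t \mapsto \inner{\sy_t}{e_k}_{\mathcal{H}_2}$ is a genuine real It\^o process with drift $\inner{f_t}{e_k}_{\mathcal{H}_3 \times \mathcal{H}_1}$ and diffusion coefficient $B_t^* e_k$. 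Hence $\Pi_n \sy$ is a finite-dimensional It\^o process, and the elementary It\^o formula applied to $\norm{\Pi_n \sy_t}_{\mathcal{H}_2}^2 = \sum_{k=1}^n \inner{\sy_t}{e_k}_{\mathcal{H}_2}^2$ yields exactly \eqref{ito big dog*} with each occurrence of $f$, $B$, $\sy$ replaced by its $\Pi_n$-projection.

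\textbf{Passage to the limit.} The remaining task is to send $n \to \infty$ in this projected identity. The terms $\norm{\Pi_n \sy_t}_{\mathcal{H}_2}^2 \to \norm{\sy_t}_{\mathcal{H}_2}^2$, $\norm{\Pi_n \sy_0}_{\mathcal{H}_2}^2 \to \norm{\sy_0}_{\mathcal{H}_2}^2$ and $\norm{\Pi_n B_s}_{\mathscr{L}^2(\mathfrak{U};\mathcal{H}_2)}^2 \to \norm{B_s}_{\mathscr{L}^2(\mathfrak{U};\mathcal{H}_2)}^2$ are immediate from $\mathcal{H}_2$-convergence of projections, and the martingale term converges after the localisation by the It\^o isometry and dominated convergence. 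The delicate term, and what I expect to be the main obstacle, is the drift pairing $\int_0^{t\wedge\tau} 2 \sum_{k=1}^n \inner{f_s}{e_k}_{\mathcal{H}_3 \times \mathcal{H}_1} \inner{\sy_s}{e_k}_{\mathcal{H}_2} \, ds$: to identify its limit as $\int_0^{t\wedge\tau} 2\inner{f_s}{\sy_s}_{\mathcal{H}_3 \times \mathcal{H}_1}\,ds$ one needs $\Pi_n \sy_s \to \sy_s$ in $\mathcal{H}_1$, which does not follow from $\mathcal{H}_2$-orthogonality of $\Pi_n$ alone. I would resolve this by choosing the basis adapted to the embedding $\mathcal{H}_1 \hookrightarrow \mathcal{H}_2$ --- concretely, eigenfunctions of the self-adjoint operator realising this embedding (in the application, the Stokes operator), which are simultaneously orthogonal in $\mathcal{H}_1$ and $\mathcal{H}_2$ --- so that $\Pi_n$ converges strongly to the identity on $\mathcal{H}_1$. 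Then $\inner{f_s}{\Pi_n\sy_s}_{\mathcal{H}_3 \times \mathcal{H}_1} \to \inner{f_s}{\sy_s}_{\mathcal{H}_3 \times \mathcal{H}_1}$ pointwise in $s$, and the $L^2$-in-time bounds from the localisation permit dominated convergence, yielding the limit in expectation and then, along a subsequence, $\mathbbm{P}$-a.s. A robust alternative avoiding any assumption on the basis is to mollify \eqref{newest identity*} in time, apply the standard $\mathcal{H}_2$-valued It\^o formula to the regularised process, and pass to the limit in the mollification parameter.

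\textbf{Continuity.} Finally, the $\mathcal{H}_2$-continuity of $\sy$ follows from \eqref{ito big dog*} together with weak continuity: each scalar $t \mapsto \inner{\sy_t}{e_k}_{\mathcal{H}_2}$ is continuous and $\sy$ is bounded in $\mathcal{H}_2$, giving weak continuity in $\mathcal{H}_2$, while \eqref{ito big dog*} shows $t \mapsto \norm{\sy_t}_{\mathcal{H}_2}^2$ is continuous; weak continuity combined with convergence of the norm upgrades to strong continuity of $\sy$ in $\mathcal{H}_2$.
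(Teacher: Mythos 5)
The paper does not prove this proposition from scratch: it is imported from [goodair2024stochastic] Proposition 4.3, described there as a slight extension of the Gelfand-triple It\^{o} formula of Pr\'{e}v\^{o}t--R\"{o}ckner (Lemma 4.2.5), whose proof runs through a time-discretisation/telescoping argument for $\norm{\sy_{t_{k+1}}}_{\mathcal{H}_2}^2 - \norm{\sy_{t_k}}_{\mathcal{H}_2}^2$ rather than the spatial Galerkin projection you propose. Your outline is the other classical route, and you correctly isolate the one genuinely hard step: passing to the limit in the drift pairing $\int_0^{t\wedge\tau}\inner{f_s}{\Pi_n\sy_s}_{\mathcal{H}_3\times\mathcal{H}_1}ds$, which requires control of $\Pi_n\sy_s$ in $\mathcal{H}_1$ and not merely in $\mathcal{H}_2$.

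The difficulty is that your primary fix for that step is not available in the stated generality. A basis of $\mathcal{H}_2$ contained in $\mathcal{H}_1$ and simultaneously orthogonal in both spaces exists when the operator realising the embedding $\mathcal{H}_1\hookrightarrow\mathcal{H}_2$ has an eigenbasis --- for instance when the embedding is compact, as it is in the application via the Stokes eigenfunctions $(a_k)$ --- but the proposition assumes only a continuous dense embedding, for which that operator may have purely continuous spectrum and no eigenbasis (e.g.\ $W^{1,2}(\R)\subset L^2(\R)$). Without double orthogonality the $\mathcal{H}_2$-orthogonal projections $\Pi_n$ need not be bounded on $\mathcal{H}_1$ uniformly in $n$, so neither the pointwise convergence $\Pi_n\sy_s\to\sy_s$ in $\mathcal{H}_1$ nor the domination $\norm{\Pi_n\sy_s}_{\mathcal{H}_1}\leq\norm{\sy_s}_{\mathcal{H}_1}$ that your dominated-convergence step relies on is available. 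Your fallback --- regularise \eqref{newest identity*} in time and pass to the limit --- is the robust strategy and is essentially what the cited proofs carry out, but it constitutes the entire technical content of the theorem (the cross terms must be handled using only the $\mathcal{H}_3\times\mathcal{H}_1$ duality and the $L^2$-in-time regularity of $\sy$ in $\mathcal{H}_1$), so deferring it in one sentence leaves the proof incomplete as written. The localisation by $\tau_R$, the identification of each coordinate $\inner{\sy_t}{e_k}_{\mathcal{H}_2}$ as a real It\^{o} process via the compatibility of the pairing, and the final upgrade from weak $\mathcal{H}_2$-continuity plus continuity of $t\mapsto\norm{\sy_t}_{\mathcal{H}_2}^2$ to strong continuity are all standard and correct, modulo the usual care in obtaining \eqref{ito big dog*} on a single null set for all $t$ simultaneously before invoking it for the boundedness of the norm.
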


\begin{proof}
    See [\cite{goodair2024stochastic}] Proposition 4.3, a slight extension of [\cite{prevot2007concise}] Lemma 4.2.5.
\end{proof}

\begin{lemma} \label{Lemma 5.2}
    Let $\mathcal{H}_1, \mathcal{H}_2$ be Hilbert Spaces such that $\mathcal{H}_1$ is compactly embedded into $\mathcal{H}_2$, and for some fixed $T>0$ let $(\sy^n): \Omega \times [0,T] \rightarrow \mathcal{H}_1$ be a sequence of measurable processes such that \begin{equation} \label{first condition} \sup_{n\in \N}\mathbbm{E}\int_0^T\norm{\sy^n_s}^2_{\mathcal{H}_1}ds < \infty\end{equation} and for any $\varepsilon > 0$, 
    \begin{equation}\label{second condition} \lim_{\delta \rightarrow 0^+}\sup_{n \in \N}\mathbbm{P}\left(\left\{\omega \in \Omega:\int_0^{T-\delta}\norm{\sy^n_{s + \delta}(\omega) - \sy^n_s(\omega)}^2_{\mathcal{H}_2}ds > \varepsilon\right\} \right) =0.\end{equation}
    Then the sequence of the laws of $(\sy^n)$ is tight in the space of probability measures over $L^2\left([0,T];\mathcal{H}_2\right)$.
\end{lemma}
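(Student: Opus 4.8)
The plan is to derive the tightness from a deterministic compactness criterion combined with Prohorov's Theorem. Throughout, write $\phi_f(\delta):=\int_0^{T-\delta}\norm{f_{s+\delta}-f_s}_{\mathcal{H}_2}^2\,ds$ for the $\mathcal{H}_2$-valued time-translation functional. The criterion I would invoke is the classical Aubin--Lions--Simon type result: a subset $K\subseteq L^2([0,T];\mathcal{H}_2)$ is relatively compact provided it is bounded in $L^2([0,T];\mathcal{H}_1)$ and satisfies $\lim_{\delta\to0^+}\sup_{f\in K}\phi_f(\delta)=0$, the compact embedding $\mathcal{H}_1\hookrightarrow\hookrightarrow\mathcal{H}_2$ being precisely what converts this uniform boundedness and equicontinuity into compactness. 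By Prohorov's Theorem it then suffices to construct, for each $\eta>0$, a compact set $K_\eta$ with $\sup_{n\in\N}\mathbbm{P}(\sy^n\notin K_\eta)\le\eta$.

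For the boundedness, set $C_1:=\sup_n\mathbbm{E}\int_0^T\norm{\sy^n_s}_{\mathcal{H}_1}^2\,ds$, which is finite by \eqref{first condition}; Markov's inequality then gives $\sup_n\mathbbm{P}(\int_0^T\norm{\sy^n_s}_{\mathcal{H}_1}^2\,ds>R_\eta)\le C_1/R_\eta\le\eta/2$ once $R_\eta$ is large. For the translation modulus I would exploit \eqref{second condition} to select, along a decreasing sequence of lags $\delta_k\downarrow0$, summable thresholds so that the associated bad probabilities are controlled, and then take
\[
K_\eta:=\left\{f:\int_0^T\norm{f_s}_{\mathcal{H}_1}^2\,ds\le R_\eta\right\}\cap\left\{f:\phi_f(\delta_k)\le a_k^2\ \text{ for all }k\ge 0\right\},
\]
where $\sum_k a_k<\infty$ and $a_k\downarrow 0$. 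The point is that membership in $K_\eta$ only constrains $\phi_f$ at \emph{countably} many fixed lags, yet still forces the full continuum modulus to vanish: using the subadditivity $\sqrt{\phi_f(a+b)}\le\sqrt{\phi_f(a)}+\sqrt{\phi_f(b)}$ (from the triangle inequality and the contractivity of time translation), a telescoping argument down the dyadic-type scales yields $\sup_{\delta\le\delta_m}\sqrt{\phi_f(\delta)}\le 2\sum_{k\ge m}a_k$, whose uniform vanishing as $m\to\infty$ is exactly the tail of a convergent series. Hence $K_\eta$ meets both hypotheses of the criterion; it is closed because $f\mapsto\int_0^T\norm{f_s}_{\mathcal{H}_1}^2\,ds$ is lower semicontinuous and each $f\mapsto\phi_f(\delta_k)$ is continuous for $L^2([0,T];\mathcal{H}_2)$ convergence, so $K_\eta$ is compact, and a union bound over the countably many defining events gives $\sup_n\mathbbm{P}(\sy^n\notin K_\eta)\le\eta$.

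I expect the main obstacle to be this last estimate, namely passing from the \emph{pointwise-in-}$\delta$ in-probability control supplied by \eqref{second condition} to a single high-probability event on which the modulus is uniformly small across all small $\delta$ simultaneously. The subadditivity reduction above is what makes this feasible, since it collapses the problematic continuum supremum into a countable sum $\sum_k\mathbbm{P}(\phi_{\sy^n}(\delta_k)>a_k^2)$ of increments at \emph{fixed} lags; the delicate point is then the joint choice of the scales $\delta_k$ and the summable thresholds $a_k$ so that, for each $k$, \eqref{second condition} forces $\sup_n\mathbbm{P}(\phi_{\sy^n}(\delta_k)>a_k^2)$ below a summable budget, while the subadditivity constraint keeps the $\delta_k$ from having to decay faster than geometrically. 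Here I would also use the crude uniform bound $\phi_{\sy^n}(\delta)\le 4\int_0^T\norm{\sy^n_s}_{\mathcal{H}_2}^2\,ds$ together with \eqref{first condition} to tame the coarsest scale. The remaining items---joint measurability of $(\omega,\delta)\mapsto\phi_{\sy^n}(\delta)$ and the reduction of tightness to the product space in the final application---I would treat as routine bookkeeping.
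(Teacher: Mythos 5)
The paper offers no argument for this lemma at all---its ``proof'' is the citation to [rockner2022well]---so there is nothing in-paper to match your approach against; I can only assess your proposal on its own terms. The skeleton (a Simon-type deterministic compactness criterion, Prohorov's Theorem, compact sets cut out by an $L^2([0,T];\mathcal{H}_1)$-ball together with translation-modulus constraints, Markov's inequality via \eqref{first condition} for the ball) is the standard and correct one, and the subadditivity $\sqrt{\phi_f(a+b)}\le\sqrt{\phi_f(a)}+\sqrt{\phi_f(b)}$ is indeed valid. The problem sits exactly at the step you flag as delicate, and it is not merely delicate: the joint choice of scales $\delta_k$ and summable thresholds $a_k$ that you describe is impossible in general. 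Condition \eqref{second condition} is rate-free: for a given pair $(a_k^2,p_k)$ it only guarantees $\sup_n\mathbbm{P}(\phi_{\sy^n}(\delta)>a_k^2)\le p_k$ for $\delta$ below some threshold $h(a_k^2,p_k)$, and nothing in the hypotheses prevents $h$ from degenerating arbitrarily fast as its arguments tend to zero (say $h(\varepsilon,p)=2^{-1/(\varepsilon p)}$). On the other hand, for your telescoping to reach \emph{every} $\delta\le\delta_m$ you need every such $\delta$ to be a (possibly infinite, using continuity of $\delta\mapsto\phi_f(\delta)$) subset-sum of $\{\delta_k\}_{k>m}$; by the classical characterisation of achievement sets of positive series this forces $\delta_k\le\sum_{j>k}\delta_j$ for all large $k$, hence $\sum_{j>k}\delta_j\ge 2^{-(k-m)}\sum_{j>m}\delta_j$, i.e.\ the tails may decay at most geometrically. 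Since summability of $(a_k)$ and $(p_k)$ forces both to tend to zero, the constraint $\delta_k\le h(a_k^2,p_k)$ makes those tails decay super-geometrically for an adversarial $h$, and the two requirements cannot be met simultaneously. The union bound $\sum_k\sup_n\mathbbm{P}(\phi_{\sy^n}(\delta_k)>a_k^2)$ therefore cannot be closed by this route, and the crude bound $\phi_{\sy^n}(\delta)\le 4\int_0^T\norm{\sy^n_s}_{\mathcal{H}_2}^2ds$ only caps the modulus by a constant, which does not rescue the fine scales.

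The missing idea is to replace the pointwise-in-lag modulus by an averaged one, for which a Fubini argument in $(\omega,\delta)$ genuinely converts \eqref{second condition} into a usable estimate. Averaging the splitting $f_{s+\delta}-f_s=(f_{s+\delta}-f_{s+\delta'})+(f_{s+\delta'}-f_s)$ over $\delta'\in(0,\delta)$ gives $\phi_f(\delta)\le\frac{4}{\delta}\int_0^{\delta}\phi_f(\delta')\,d\delta'$, so (equivalently, via the Steklov-average form of the Fr\'{e}chet--Kolmogorov/Simon criterion) it suffices to make the single quantity $h^{-1}\int_0^h\phi_f(\delta)\,d\delta$ uniformly small on the compact set. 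On the event $\left\{\int_0^T\norm{\sy^n_s}_{\mathcal{H}_1}^2ds\le R\right\}$, where $\phi_{\sy^n}(\delta)\le 4C^2R=:M$ for every $\delta$ at once, one has
\[
\frac{1}{h}\int_0^h\phi_{\sy^n}(\delta)\,d\delta\le\varepsilon+\frac{M}{h}\,\lambda\big(\{\delta\le h:\phi_{\sy^n}(\delta)>\varepsilon\}\big),
\]
and Markov's inequality together with Fubini (this is where the joint measurability of $(\omega,\delta)\mapsto\phi_{\sy^n}(\delta)$ is actually needed, rather than being bookkeeping) yields
\[
\E\,\lambda\big(\{\delta\le h:\phi_{\sy^n}(\delta)>\varepsilon\}\big)=\int_0^h\mathbbm{P}\big(\phi_{\sy^n}(\delta)>\varepsilon\big)\,d\delta\le h\,\sup_{0<\delta\le h}\sup_{n\in\N}\mathbbm{P}\big(\phi_{\sy^n}(\delta)>\varepsilon\big),
\]
which \eqref{second condition} does control. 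One integrated random variable per scale then replaces your countable family of thresholds, and the rest of your argument (Prohorov, closedness, the $\mathcal{H}_1$-ball) goes through unchanged.
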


\begin{proof}
    See [\cite{rockner2022well}] Lemma 5.2.
\end{proof}

\begin{lemma} \label{lemma for D tight}
    Let $\mathcal{Y}$ be a reflexive separable Banach Space and $\mathcal{H}$ a separable Hilbert Space such that $\mathcal{Y}$ is compactly embedded into $\mathcal{\mathcal{H}}$, and consider the induced Gelfand Triple
    $$\mathcal{Y} \xhookrightarrow{} \mathcal{H} \xhookrightarrow{} \mathcal{Y}^*. $$ For some fixed $T>0$ let $\sy^n: \Omega \rightarrow C\left([0,T];\mathcal{H}\right)$ be a sequence of measurable processes such that for every $t\in[0,T]$, \begin{equation} \label{first condition primed}
        \sup_{n \in \N}\mathbbm{E}\left(\sup_{t\in[0,T]}\norm{\sy^n_t}_{\mathcal{H}}\right) < \infty
    \end{equation}
    and for any sequence of stopping times $(\gamma_n)$ with $\gamma_n: \Omega \rightarrow [0,T]$, any $\varepsilon > 0$, and any $v \in \mathcal{V}$ where $\mathcal{V}$ is a dense set in $\mathcal{Y}$,
    \begin{equation} \label{second condition primed}
        \lim_{\delta \rightarrow 0^+}\sup_{n \in \N}\mathbbm{P}\left(\left\{
    \omega \in \Omega: \left\vert \left\langle \sy^n_{(\gamma_n + \delta) \wedge T} -\sy^n_{\gamma_n }   , v     \right\rangle_{\mathcal{H}} \right\vert > \varepsilon \right\}\right)  = 0.
    \end{equation}
    Then the sequence of the laws of $(\sy^n)$ is tight in the space of probability measures over $C\left([0,T];\mathcal{Y}^*\right)$.
\end{lemma}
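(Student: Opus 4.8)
The plan is to prove tightness through an Arzelà--Ascoli characterisation of relatively compact subsets of $C\left([0,T];\mathcal{Y}^*\right)$: for each $\eta > 0$ I will construct a compact set $K_\eta$ with $\inf_{n}\mathbbm{P}\left(\sy^n \in K_\eta\right) \geq 1 - \eta$. Recall that a family of paths is relatively compact in $C\left([0,T];\mathcal{Y}^*\right)$ once it is uniformly equicontinuous and its values at each time lie in a fixed compact subset of $\mathcal{Y}^*$. The second ingredient comes for free: the embedding $\mathcal{H} \hookrightarrow \mathcal{Y}^*$ is the adjoint of the compact embedding $\mathcal{Y} \hookrightarrow \mathcal{H}$, hence is itself compact, so every closed $\mathcal{H}$-ball $\overline{B}_M := \left\{f \in \mathcal{H}: \norm{f}_{\mathcal{H}} \leq M\right\}$, regarded as a subset of $\mathcal{Y}^*$, is compact in $\mathcal{Y}^*$. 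By (\ref{first condition primed}) and Chebyshev's Inequality I would first fix $M$ with $\sup_{n}\mathbbm{P}\left(\sup_{t}\norm{\sy^n_t}_{\mathcal{H}} > M\right) < \eta/2$, placing the paths in $\overline{B}_M$ with high probability.

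The heart of the argument is the $\mathcal{Y}^*$-norm modulus of continuity, which I would reduce to finitely many scalar increments. Since $\mathcal{Y}$ is separable and $\mathcal{V}$ dense in it, there is a countable family $(v_j) \subset \mathcal{V}$ dense in the unit ball of $\mathcal{Y}$, and for every $w \in \mathcal{H}$ one has $\norm{w}_{\mathcal{Y}^*} = \sup_{j}\abs{\inner{w}{v_j}_{\mathcal{H}}}$. The functionals $w \mapsto \max_{j \leq J}\abs{\inner{w}{v_j}_{\mathcal{H}}}$ are continuous on $\mathcal{Y}^*$ and increase pointwise, as $J \to \infty$, to the continuous functional $w \mapsto \norm{w}_{\mathcal{Y}^*}$ on the compact set $\overline{B}_{2M} \subset \mathcal{Y}^*$; Dini's Theorem then makes this convergence uniform, so for any $\theta > 0$ I can select $v_1, \dots, v_J \in \mathcal{V}$ with $\norm{w}_{\mathcal{Y}^*} \leq \max_{j \leq J}\abs{\inner{w}{v_j}_{\mathcal{H}}} + \theta$ for all $w \in \overline{B}_{2M}$. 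On the event $\left\{\sup_t \norm{\sy^n_t}_{\mathcal{H}} \leq M\right\}$ every increment $\sy^n_t - \sy^n_s$ lies in $\overline{B}_{2M}$, so the $\mathcal{Y}^*$-modulus of $\sy^n$ is bounded, up to $\theta$, by the maximum over $j \leq J$ of the real-valued moduli of continuity of the scalar processes $X^{n,j} := \inner{\sy^n_\cdot}{v_j}_{\mathcal{H}}$.

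It then remains to control each scalar modulus uniformly in $n$. For fixed $v \in \mathcal{V}$ the process $X^n := \inner{\sy^n_\cdot}{v}_{\mathcal{H}}$ is real-valued with continuous paths; its one-dimensional distributions are tight since $\abs{X^n_t} \leq \norm{v}_{\mathcal{Y}}\norm{\sy^n_t}_{\mathcal{Y}^*} \leq c\norm{\sy^n_t}_{\mathcal{H}}$ is bounded in expectation by (\ref{first condition primed}), while hypothesis (\ref{second condition primed}) is exactly the Aldous condition for $X^n$ expressed through the stopping times $\gamma_n$ and $(\gamma_n + \delta) \wedge T$. The Aldous criterion therefore yields tightness of the laws of $X^n$ in $C\left([0,T];\R\right)$, equivalently $\lim_{\delta \to 0^+}\sup_n \mathbbm{P}\left(w(X^n, \delta) > \varepsilon\right) = 0$ for every $\varepsilon > 0$, where $w(\cdot,\delta)$ denotes the modulus of continuity on scale $\delta$. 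Taking $\theta = \varepsilon$ in the Dini step and applying a union bound over the finitely many indices $j \leq J$, the probability that $\sup_{\abs{t-s}\leq\delta}\norm{\sy^n_t - \sy^n_s}_{\mathcal{Y}^*}$ exceeds $2\varepsilon$ is at most $\mathbbm{P}\left(\sup_t\norm{\sy^n_t}_{\mathcal{H}} > M\right) + \sum_{j \leq J}\mathbbm{P}\left(w(X^{n,j},\delta) > \varepsilon\right)$; the first summand is below $\eta/2$ uniformly in $n$ and the finite sum vanishes as $\delta \to 0^+$, so since $\eta$ was arbitrary we obtain $\lim_{\delta \to 0^+}\sup_n \mathbbm{P}\left(\sup_{\abs{t-s}\leq\delta}\norm{\sy^n_t - \sy^n_s}_{\mathcal{Y}^*} > 2\varepsilon\right) = 0$. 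Feeding this equicontinuity control at a decreasing sequence of scales, together with the compact containment in $\overline{B}_M$, into the Arzelà--Ascoli set $K_\eta$ via a standard union bound completes the construction.

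The main obstacle is the passage from the purely weak increment information of (\ref{second condition primed}), which controls only the scalar pairings against $\mathcal{V}$, to a genuine bound on the full $\mathcal{Y}^*$-norm modulus of continuity. This is precisely where compactness of $\mathcal{H} \hookrightarrow \mathcal{Y}^*$ is indispensable: it confines the increments to a compact subset of $\mathcal{Y}^*$ on which the norm is the uniform limit of finitely many test functionals, via Dini's Theorem. A secondary point requiring care is confirming that the stopping-time formulation of (\ref{second condition primed}) is of the form demanded by the Aldous criterion, which is immediate once one sets $\tau_n = (\gamma_n + \delta) \wedge T$.
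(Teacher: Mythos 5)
Your argument is correct, but it is a genuinely self-contained proof where the paper simply defers to an external result: the paper's own ``proof'' cites [goodair2024stochastic] Lemma 4.1 and only verifies the one new point, namely that replacing test elements $y \in \mathcal{Y}$ by elements of the dense set $\mathcal{V}$ is harmless because the induced functionals still separate points of $\mathcal{Y}^*$ (via reflexivity of $\mathcal{Y}$). You instead rebuild the whole mechanism: compactness of the $\mathcal{H}$-ball in $\mathcal{Y}^*$ (Schauder) for pointwise compact containment, Dini's Theorem on that compact ball to approximate the dual norm uniformly by finitely many pairings against $\mathcal{V}$, the Aldous criterion for each scalar process $\inner{\sy^n_{\cdot}}{v_j}_{\mathcal{H}}$, and an Arzel\`{a}--Ascoli assembly. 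This is essentially the engine hidden inside the cited lemma, so nothing is lost; what your route buys is that the density of $\mathcal{V}$ is used quantitatively (uniform approximation of $\norm{\cdot}_{\mathcal{Y}^*}$ on the compact ball) rather than qualitatively (separating points), and in fact you never need reflexivity of $\mathcal{Y}$, only that $\mathcal{H}$ is Hilbert so that closed $\mathcal{H}$-balls are compact, not merely relatively compact, in $\mathcal{Y}^*$. Two small points deserve a sentence each in a polished write-up: a dense subset of $\mathcal{Y}$ need not meet the closed unit ball densely, so one should either work with $\sup_{v \in \mathcal{V}, v \neq 0}\abs{\inner{w}{v}}/\norm{v}_{\mathcal{Y}}$ or note that $\mathcal{V}$ intersected with the open unit ball is dense there, which suffices for the dual norm; and the Aldous criterion directly controls the Skorokhod modulus $w'$, so one should invoke the inequality $w(X,\delta) \leq 2w'(X,\delta) + \sup_t\abs{\Delta X_t}$, with vanishing jump term for the continuous paths at hand, to reach the uniform modulus you use in the union bound.
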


\begin{proof}
    This is essentially [\cite{goodair2024stochastic}] Lemma 4.1, slightly loosened as we have replaced taking any $y \in \mathcal{Y}$ with any $v \in \mathcal{V}$. This is justified similarly to [\cite{goodair2024stochastic}] Lemma 4.2 where elements from any dense set are allowed; in modifying the proof we are only concerned with whether the new set $\mathbbm{F}$ defined as the collection of mappings given for every $v \in \mathcal{V}$ on $\mathcal{Y}^*$ by $\phi \mapsto \inner{\phi}{v}_{\mathcal{Y}^* \times \mathcal{Y}}$ separates points in $\mathcal{Y}^*$. This is true as from the reflexivity of $\mathcal{Y}$, the canonical embedding of $\mathcal{V}$ into $\left(\mathcal{Y}^*\right)^*$ is dense so separates points according to $\left(\mathcal{Y}^*\right)^*$. 
\end{proof}

\begin{lemma} \label{gyongykrylov}
    Let $(\sy^n)$ be a sequence of random variables in a Polish Space $\mathcal{Y}$ equipped with Borel $\sigma-$algebra. Then $(\sy^n)$ converges in probability to some $\mathcal{Y}-$valued random variable $\sy$ if and only if for every pair of subsequences $(\sy^l)$ and $(\sy^m)$, the product $(\sy^{l}, \sy^{m})$ admits a further subsequence converging in law in $\mathcal{Y} \times \mathcal{Y}$ to some $\mathcal{Y} \times \mathcal{Y}-$valued random variable supported only on the diagonal. 
\end{lemma}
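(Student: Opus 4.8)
The plan is to metrize $\mathcal{Y}$ by a bounded complete metric $\rho$, for instance $\rho := d \wedge 1$ where $d$ is any complete metric inducing the Polish topology; this choice keeps $\rho$ bounded and continuous while leaving the notion of Cauchy sequence unchanged. With this metric, convergence in probability of $\sy^n$ to $\sy$ is equivalent to $\E\left[\rho(\sy^n,\sy)\right] \to 0$, and the Cauchy property in probability to $\sup_{l,m \geq N}\E\left[\rho(\sy^l,\sy^m)\right] \to 0$ as $N \to \infty$. I would treat the two implications separately, proving the nontrivial direction by first establishing that $(\sy^n)$ is Cauchy in probability and then upgrading via completeness of $\mathcal{Y}$.

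The forward implication is immediate. If $\sy^n \to \sy$ in probability, then any two subsequences $(\sy^l)$ and $(\sy^m)$ also converge in probability to $\sy$, so the product $(\sy^l,\sy^m)$ converges in probability, hence in law, to $(\sy,\sy)$. The law of $(\sy,\sy)$ is supported on the diagonal $\Delta := \{(x,x) : x \in \mathcal{Y}\}$, so the required convergence holds for the whole product with no further subsequence extraction needed.

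For the reverse implication I would argue by contradiction that $(\sy^n)$ is Cauchy in probability. Suppose not; then there exist $\varepsilon, \eta > 0$ and subsequences $(l_k),(m_k)$, both tending to infinity, with
\[ \mathbbm{P}\left(\rho\left(\sy^{l_k},\sy^{m_k}\right) > \varepsilon\right) \geq \eta \qquad \text{for all } k. \]
Applying the hypothesis to the pair of subsequences $(\sy^{l_k})$ and $(\sy^{m_k})$ produces a further subsequence along which $(\sy^{l_{k_j}},\sy^{m_{k_j}})$ converges in law to some probability measure $\mu$ on $\mathcal{Y} \times \mathcal{Y}$ supported on $\Delta$. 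Since $(x,y) \mapsto \rho(x,y)$ is bounded and continuous and vanishes identically on $\Delta$, the definition of weak convergence gives $\E\left[\rho\left(\sy^{l_{k_j}},\sy^{m_{k_j}}\right)\right] \to \int_{\Delta}\rho \, d\mu = 0$. Taking $\varepsilon < 1$ without loss of generality, Markov's inequality then forces $\mathbbm{P}\left(\rho(\sy^{l_{k_j}},\sy^{m_{k_j}}) > \varepsilon\right) \to 0$, contradicting the lower bound $\eta$. Hence $(\sy^n)$ is Cauchy in probability, and completeness of $\mathcal{Y}$ upgrades this to convergence in probability to some limit $\sy$ (extract an almost surely convergent subsequence and identify its limit).

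I expect the forward direction and the completeness step to be entirely routine, so the main obstacle is the contradiction argument in the reverse direction: the decisive point is the clean passage from \emph{support on the diagonal} of the limiting law to \emph{vanishing of the test functional} $\E[\rho(\cdot,\cdot)]$. This is exactly where the measure-theoretic diagonal condition is converted into a probabilistic closeness statement, and it is essential that $\rho$ be chosen bounded so that the bounded-continuous-function characterisation of weak convergence applies without any further truncation or uniform integrability argument.
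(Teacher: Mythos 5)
Your argument is correct and complete; the paper itself gives no proof here, simply deferring to Gy\"{o}ngy--Krylov, and your contradiction argument (bounded metric, negation of the Cauchy-in-probability property, weak convergence against the bounded continuous function $\rho$ vanishing on the diagonal, then completeness to upgrade Cauchy to convergent) is precisely the standard proof of that cited lemma. No gaps.
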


\begin{proof}
    See [\cite{gyongy1996existence}] Lemma 1.1.
\end{proof}

\addcontentsline{toc}{section}{References}
\bibliographystyle{newthing}
\bibliography{myBibby}

\end{document}